\documentclass[11pt]{amsart}

\textwidth 16.5cm
\textheight 24.5cm
\oddsidemargin 0cm
\evensidemargin 0cm
\topmargin -1.5cm
\headheight 0.38cm
\headsep 0.85cm
\footskip 1.5cm

\usepackage{graphicx}
\usepackage{amsmath}
\usepackage{amsfonts}
\usepackage{amscd}

 \usepackage{pifont}

\newtheorem{thm*}{\bf Theorem}[section]

\newtheorem{thm}{\bf Theorem}[section]
\newtheorem{cor}{\bf Corollary}[section]
\newtheorem{lemma}{\bf Lemma}[section]
\newtheorem{prop}{\bf Proposition}[section]
\newtheorem{defn}{\bf Definition}[section]

\newtheorem{rem}{\bf Remark}[section]
\newtheorem{nota}{\bf Notation}[section]

\newcommand{\<}{\langle}
\renewcommand{\>}{\rangle}
\newcommand{\D}{\text{d}}

\newcommand{\tr}{\operatorname{tr}}

\newcommand{\Res}{\operatorname{Res}}

\renewcommand{\phi}{\varphi}

\newcommand{\A}{\operatorname{\mathfrak{a}}}

\newcommand{\B}{\operatorname{\mathfrak{b}}}
 
\renewcommand{\Im}{\operatorname{Im}}
\renewcommand{\Re}{\operatorname{Re}}

\newcommand{\RR}{{\mathbb R}}
\newcommand{\CC}{{\mathbb C}}

\newcommand{\NN}{{\mathbb N}}

\newcommand{\ZZ}{{\mathbb Z}}

%


\begin{document}

 \title{
\Large{ Selberg  Zeta function and hyperbolic Eisenstein series.}
}

\author{ Th\'er\`ese \textsc{Falliero}}
\address{ Universit\'e d'Avignon et des Pays de Vaucluse,
Laboratoire de math\'ematiques d'Avignon (EA 2151),
F-84018 Avignon, France}

\email{therese.falliero@univ-avignon.fr}

\subjclass[2010]{Primary 30F30, 32N10, 47A10  ; Secondary 53C20,
11M36, 11F12}

\keywords{ Eisenstein series, Selberg  Zeta function, Degenerating surfaces}

\maketitle

%

\section{Introduction}


The relation between the geometric and the spectral properties of a Riemann surface (e.g., volume, periodic geodesics, etc., among the geometric objects, and eigenvalues, resonances, automorphic functions, etc., among the spectral entities) is an important subject with a long, rich history.

Here we consider a family of degenerating geometrically finite hyperbolic surfaces $(S_l)$ with one or more non separating geodesics or geodesics of a funnel being pinched.

The aim of the paper is to study the behavior of the hyperbolic Eisenstein series of $(S_l)$ through degeneration (for results on
degenerating Eisenstein series, see, for example \cite{o2},
\cite{o4}, \cite{jorg}, \cite{pippich}), on the left of  the critical axis $\{ s\in \CC, \Re s=1/2\}$.
For purposes of illustration, it is desirable to restrict ourselves to the weight 0 case.
We plan to devote one later paper to a treatment of more general weights.

Because of the behavior of the eigenvalues through degeneration (see \cite{jit}), it is not possible to define a good notion of convergence for hyperbolic Eisenstein series in any domain that intersects the critical axis  (at least if the surfaces $S_l$ are compact). To overthrow this problem we consider the product of  the Selberg Zeta function and  hyperbolic Eisenstein series which gives a holomorphic function on $\Re s=1/2$ (except eventually at $s=1/2$).

More precisely, let $S_l=\Gamma_l\backslash H$ be a family of geometrically finite hyperbolic surfaces 
degenerating to the surface $S$ with only one geodesic $c_l$
being pinched.  The group  
  $\Gamma_l$ contains the transformation
$\sigma_l(z)=e^lz$ corresponding to $c_l$ and the left half-collar for $c_l$ is in  $S_l\backslash F_1$. Let $S_0=\Gamma\backslash H$ be the component of $S$
\begin{enumerate}
\item  containing, in the non separating case,
$p$,  the cusp arising from the right  half-collar for $c_l$; $q$  will denote the cusp arising from the left  half-collar.
\item   containing, in the funnel's geodesic boundary case, the compact core, the cusp will be denoted by $\infty$.
\end{enumerate}
 Let $Z_l(s)$ be the Selberg Zeta function of $S_l$, $z_l(s) $   
	these of the hyperbolic cylinder $\<\sigma_l\>\backslash H$ 
and $E_l(z,s)$ the hyperbolic Eisenstein series associated to $c_l$.

Let denote by 
$$E^*_l(z,s)=\frac{Z_l(s)}{z_l(s)}\frac{1}{l^s}E_l(z,s)\, .$$
Consider two cases
\begin{enumerate}
\item $c_l$ is not separating,
\item $c_l$ is the geodesic boundary of  a funnel.

\end{enumerate}
We know by (\cite{schulze}, \cite{preprint} and \cite{hal}) that the family $(E^*_l(.,s))_l$ converges uniformly on every compact  subsets of $\Re s>1/2$ and $S_0$ to,
\begin{enumerate}
\item $Z_0(s)(E_p(.,s)+E_q(.,s))$ in the separating case,
\item $Z_0(s) E_{\infty}(.,s)$ in the funnel geodesic boundary case,
\end{enumerate}
 where   $Z_0(s)$ corresponds to the Selberg Zeta function of $S_0$.

\medskip
We summarize the main results:
\begin{enumerate}
\item In the finite volume case:
\begin{itemize}
\item  the meromorphic family of weighted hyperbolic Eisenstein series $(\frac{1}{l^s}E_l(.,s))_l$ converges uniformly on every compact  subsets of $S_0$ and on $0<\Re s<1/2$, to 0; 
\item the family $(E^*_l(.,s))_l$ doesn't converge on $\Re s>0$;
\item in the non compact case, the scattering matrix and  the (classical)  Eisenstein series converge for $\Re s<1/2$;
\item the weighted Selberg Zeta function  $\displaystyle\frac{Z_l(s)}{z_l(s)}$, doesn't converge on $0<\Re s <1/2$ even if  the $S_l$ are non compact.
\end{itemize}
\item In the infinite volume case, with $c_l$ the geodesic boundary of a funnel:
\begin{itemize}
\item  for $\Re s >1/2$, $2(1-s)\not = 0, -1,...$,in particular  for $1/2<\Re s <3/2$, the relative scattering determinant, $\tau_l(s)$, tends to $\displaystyle\frac{1}{2\sin^2\frac{\pi}{2}s}$ as $l$ tends to 0;
\item  the weighted Selberg Zeta function doesn't converge on $0<\Re s <1/2$;
\item  the meromorphic family of weighted hyperbolic Eisenstein series $(\frac{1}{l^s}E_l(.,s))_l$ converges  uniformly on every compact  subsets of $S_0$ and on $0<\Re s<1/2$, to 0;
\item  the family $(E^*_l(.,s))_l$ converges uniformly on every compact  subsets of $S_0$ and  $\Re s>0$, to  $Z_0(s) E_{\infty}(.,s)$.
\end{itemize}

\end{enumerate}

\begin{rem}
\begin{enumerate}
\item
In particular we answer a remark of Schulze (\cite{schulze}, p.122).
\item We can also note the parallel with his result (\cite{schulze}, Theorem 40).
\item The domains in the last two points of the preceding main results, are not the best, but in this introduction,  we let them because our aim is just to show,  we can pass the critical axis.
\end{enumerate}
\end{rem}

An application expected would be, in particular, to obtain some characterization of embedded eigenvalues. More precisely, an idea was to exploit the fact that (classical)  Eisenstein series don't have any pole on $\Re s =1/2$, while hyperbolic Eisenstein series have simple poles on $\Re s =1/2$.

In view of the summarized results, such application doesn't exist in the finite volume case. In the infinite volume case, to achieve such a characterization we need, in particular, to control the multiplicity of a resonance. If we had such a result as Corollary 4.2 in \cite{w2}, then an answer could be given. 
\section{Notations}\label{not}
We recall here, briefly, some notations.

A family of degenerating geometrically finite hyperbolic surfaces  consists of a surface $M$ and a smooth family $(g_l)_{l>0}$ of Riemannian metrics that meet the following assumptions:
\begin{enumerate}
\item The Riemannian manifold $M_l=(M,g_l)$ is a geometrically finite hyperbolic surface  for each $l$.
\item  There are
 finitely many disjoint open subsets  that are diffeomorphic to cylinders $ {\mathbb R}\backslash {\mathbb Z}\times J_i$ where $J_i\subset \mathbb R$ is a connected neighborhood of $0$, of cuspidal and funnel types. A cuspidal type  contains a cusp $C_i$, $1\leq i \leq n_c$, which is isometric to the half parabolic cylinder $C_\infty=([0,\infty[)_r \times ({\mathbb R}/ {\mathbb Z})_x$ with the metric $ds^2=dr^2+e^{-2r}dx^2$. A funnel type contains a funnel $F_j$, $1\leq j\leq n_f$, which is isometric to a  half  hyperbolic cylinder $\Gamma_{l_j}\backslash H$, $F_{l_j}=({\mathbb R}^+)_r\times ({\mathbb R}/ {\mathbb Z})_x$
with the metric $ds^2=dr^2+{l_j}^2\cosh^2(r)dx^2$ and $l_j$ is the lenght of the geodesic boundary.
\item There are
 finitely many disjoint open subsets ${\mathcal C}_{l,i}\subset M$ that are diffeomorphic to cylinders $ {\mathbb R}\backslash {\mathbb Z}\times J_{l,i}$ where $J_{l,i}\subset \mathbb R$ is a connected neighborhood of $0$ with the metric  $(x,a)\mapsto (l_i(l)^2+ a^2)dx^2+((l_i(l)^2+ a^{2})^{-1}da^2$ and $l_i(l)\to 0$ as $l\to 0$. The curve $c_i={\mathbb R}\backslash {\mathbb Z}\times \{0\}$ is a closed geodesic of length $l_i(l)$.
\item The complement of $(C_1\cup...\cup C_{n_c})\cup (F_1\cup ...\cup F_{n_f})\cup_i {\mathcal C}_{l,i}$, where we may have some $ F_j\subset {\mathcal C}_{l,i}$ is relatively compact.
\item On $M_0:=M\backslash \cup_i c_i$, the metrics $g_l$ converge smoothly to a hyperbolic metric $g_0$ as $l\to 0$. $M_0$ is a possibly non connected hyperbolic surface that contains a pair of  cusps for each $i$.
\end{enumerate}
 We set
$ M=K\cup C\cup F\, ,$
where $C=C_1\cup...\cup C_{n_c}$ and $F=F_1\cup ...\cup F_{n_f}$.

The standard  funnel of diameter $l>0$, $F_l$, is the half  hyperbolic cylinder $\<z\mapsto e^l z\>\backslash H$, $F_l=({\mathbb R}^+)_r\times ({\mathbb R}/ {\mathbb Z})_x$
with the metric $ds^2=dr^2+l^2\cosh^2(r)dx^2$, with $\<z\mapsto e^l z\>$ the cyclic
group generated by the transformation $z\mapsto e^l z$.

 The standard cusp $C_\infty$ is the half parabolic cylinder $\Gamma_\infty\backslash H$, $C_\infty=([0, \infty[)_r\times ({\mathbb R}/ {\mathbb Z})_x$ with the metric $ds^2=dr^2+e^{-2r}dx^2$.

A standard collar for a geodesic of length $l$ is a cylinder
isometric to $ \<z\mapsto e^l z\>\backslash{\mathcal C}$ with
${\mathcal C}=\{ z=re^{i\theta}, 1\leq r\leq e^l,
l<\theta<\pi-l\}\subset H$ with the restriction of the
hyperbolic metric. There is a constant $k_0$ (the
short geodesic constant) such that each closed geodesic on $M_l$
of length at most $k_0$ has a neighbourhood isometric to the
standard collar and each cusp for $M_l$ has a neighbourhood
isometric to the standard cusp; furthermore, the collars for
short geodesics and the cusp regions are all mutually disjoint.

We define the function $r$ as the distance to the
compact core $K$ and the function $\rho$ by
\begin{equation}\label{standard}
\rho(r)=\left\{\begin{array}{cc}2e^{-r}&\text{ in $ F$}\\ e^{-r}&\text{ in $C$}\end{array}\right. , 
\end{equation}
with $\rho$ extended to a smooth non vanishing function inside $K$ in some arbitrary way.
We will adopt $(\rho,t)\in(0,2]\times {\RR}/l_j{\ZZ}$ as the standard
coordinates for the funnel $F_j$, where $t$ is the arc length around
the central geodesic at $\rho=2$.\\ For the cusp, our standard
coordinates $(\rho, t)\in (0,1]\times {\RR}/{\ZZ}$ are based on the
model defined by the cyclic group $\Gamma_\infty$. The cusp boundary is $y=1$, so that $y=e^r$ and
$\rho=1/y$. We set  $t \equiv x \pmod \ZZ$.

Here we consider a family of surfaces $S_l=\Gamma_l\backslash H$ degenerating to the surface $S$ with only one geodesic $c_l$ being pinched to form a pair of cusps on $S$.
As is common, we realize $H$ as $\{z\in\CC, \Im z>0\}$. Writing $z=x+iy$, then the hyperbolic metric $ds^2$ and hyperbolic Laplacian acting on function can be expresses as
\begin{equation}\label{lap}
 ds^2=\frac{dx^2+dy^2}{y^2}\quad \text{  and } \quad \Delta_l=y^2\left( \frac{\partial^2}{\partial x^2} +\frac{\partial^2}{\partial y^2}\right),
\end{equation}
$\Gamma_l$ containing the transformation $\sigma_l(z)=e^l z$ corresponding to $c_l$. 
 We denote by $p$ and $q$ the two cusps of $S$ arising from pinching $c_l$:  $p$ the limit of the right side of the $c_l$-collar and $q$ the limit of the left side of the $c_l$-collar.
Many mathematicians investigated, by various parameterizations, degeneration of hyperbolic surfaces and the asymptotic behavior of several functions. Basically those various parameterizations turn out to be almost the same powerful tools
(see Remark \ref{parame}).
We will start with \cite{w2}.  Let $K_l$ be $S_l$
minus $C_l$ the standard collar for $c_l$. There exist
homeomorphisms $f_l$ from $S_l\backslash c_l$ to $S$, with
$f_l$ tending to isometries $C^2$-uniformly on the compact core
$K_l \subset S_l$; define $\pi_l=f_l^{-1}$.
  Let
$S_0=\Gamma\backslash H$ be the component of $S$ containing
$p$ and conjugate $\Gamma$ to represent the cusp by the
translation $w\mapsto w+1$. In the following, $p=\infty$. In a similar way, without loss of generality we can represent  $q$ by  $0$.\\
Here, we will opt, as definition of the hyperbolic Eisenstein series, that which one can find in \cite{kramer}.

The exponent of convergence
$\delta$ of a Fuchsian group $\Gamma$ is defined to be the
abscissa of convergence of the Dirichlet series
$$\delta=\inf\{  s> 0, \sum_{T\in \Gamma} e^{-s d(z,Tw)} < \infty\}$$
for some $z,w\in H$, where $d(z,w)$  denotes the hyperbolic distance from $z\in H$ to $w\in H$.

 Let $\delta_l$ be the exponent of convergence for  $\Gamma_l$, then for $\Re s>\delta_l$:

\begin{equation}\label{formule}
E_l(z,s)=\sum_{\<\sigma_l\>\backslash \Gamma_l}\sin ^s(\theta \gamma z)\, .
\end{equation}
A very quickly way to verify the convergence for $\Re s>\delta_l$, of the right hand series is to use Proposition 13 of \cite{wave}, which remains true in the infinite volume case.

Recall  the definition of the (classical) Eisenstein series. We will assimilate a parabolic point with its cusp.
The stabilizer of a cusp $\A$ is an infinite cyclic group generated by a parabolic motion,
$$\Gamma_{\A}=\{ \gamma \in \Gamma : \gamma \A=\A\}=\< \gamma_{\A} \>\ ,$$
say. There exists a $\sigma_{\A} \in SL_2(\mathbb{R})$, called  a scaling matrix of the cusp $\A$,  such that
$\sigma_{\A}\infty=\A,\ \sigma_{\A}^{-1}\gamma_{\A} \sigma_{\A}=\left( \begin{array}{cc}1&1\\ 0 &1\end{array}\right) $;
$\sigma_{\A}$ is determined up to composition with a translation from the right. 
For $Y\geq 1$, the semi-strip $C_\infty(Y)=\{z=x+iy, 0<x<1, y\geq Y\}$ is mapped into the cuspidal zone $C_p(Y)=\sigma_p C_\infty(Y)$ with length $1/Y$ horocycle.

The Eisenstein series for the cusp $\A$ is then defined by
$$E_{\A}(z,s)=\sum_{\Gamma_{\A}\backslash \Gamma}y(\sigma_{\A}^{-1}\gamma z)^s\ ,$$
where $s$ is a complex variable with $\Re s >\delta_l$.
$E_{\A}(z,s)$ has Fourier expansion at the cusp $\B$ (see for example, \cite{iwaniec} Theorem 3.4, \cite{h2} Proposition 8.6, \cite{k} Section 2.2, \cite{ven} Section 3.1, \cite{fayreine})

\begin{equation}
E_{\A}(\sigma_{\B}z,s)=\delta_{\A\B} y^s +\phi_{\A\B}y^{1-s}+\sum_{n\not= 0}\phi_{\A\B}(n,s)W_s(nz)\,  ,
\end{equation}
$W_s(z)$ is expressed in terms of the Whittaker function, $W_{0,s-\frac{1}{2}}$ by: $W_s(z)= e^{2i\pi n x}W_{0,s-\frac{1}{2}}(4\pi |n| y)$, $z=x+iy$.

Whittaker functions $W_{0,s-\frac{1}{2}}$, $M_{0,s-\frac{1}{2}}$ are defined in terms of the modified Bessel functions $I$ and $K$, in the following way (see \cite{abra}, Sections 9.6-9.7; \cite{fayreine}, p.172):
$$W_{0,s-\frac{1}{2}}(t)=\left(\frac{t}{\pi}\right)^{\frac{1}{2}}K_{s-\frac{1}{2}}\left(\frac{t}{2}\right),\qquad
M_{0,s-\frac{1}{2}}(t)=\Gamma\left(s+\frac{1}{2}\right)4^{s-\frac{1}{2}}t^{\frac{1}{2}}I_{s-\frac{1}{2}}\left(\frac{t}{2}\right)\, .$$

 $K_{s-1/2}(t)$ the MacDonald-Bessel function,  has the following asymptotics, independent of the parameter $s$,
$$ K_{s-\frac{1}{2}}(t)\sim\sqrt{\frac{\pi}{2 t}} e^{-t}, \text{ as } t\to +\infty, \quad 
 I_{s-\frac{1}{2}}(t)\sim  \frac{e^t}{\sqrt{2\pi t}} , \text{ as } t\to +\infty\, .$$
Moreover we have then the asymptotic behaviors: 
\begin{eqnarray}\label{whit1}
W_{0,s-\frac{1}{2}}(t)&\sim & e^{-t/2} \qquad t\to\infty\, ;\\ \label{whit2}
M_{0,s-\frac{1}{2}}(t)&\sim &\Gamma(s+\frac{1}{2})\dfrac{4^{s-\frac{1}{2}}}{\sqrt{\pi}} e^{t/2} \qquad t\to\infty\, .
\end{eqnarray}

The Eisenstein series initially defined as a serie for $\Re s>1$ can be continued meromorphically to the all $s$-plane. The poles of $E_{\A}(z,s)$ in $\Re s >1/2$ are among the poles of $\phi_{\A\A}(s)$ and they are simple.

The Selberg Zeta function is defined for $\Re s>\delta_l$ by an absolutely convergent product 

$$Z_l(s)=\Pi_c z(l_c,s), \qquad \text{where   }  z(l_c,s)=\Pi_{k=0}^\infty(1-e^{-(s+k)l_c})^2\,.$$

The first product ranges over the set of all unoriented, primitive, closed geodesics $c$ of the surface, where $l_c$ denotes its length. $Z_l(s)$ extends meromorphically to $s\in \CC$ (see for example, \cite{borth}, Chapter 10). Note that $z(l_c,s)$ is an entire function with zeros $\{n\in\ZZ, n\leq 0\}$. When $c=c_l$, the pinched geodesic, we will denote by $z_l(s)=z(l_{c_l},s)$.

We review basic material on the expansion of eigenfunctions. Our essentials references are \cite{fayreine}, \cite{h2}.

A fundamental domain in $H$ for the cyclic sungroup $\Gamma_l$ generated by $\sigma_l$ is the semi annulus:
\begin{equation}\label{coor}
z=r e^{i\theta} \qquad 1\leq r> e^l, \, 0<\theta <\pi
\end{equation}
which is homeomorphic to the annulus $e^{-2\pi^2/l}<|t|<1$ under the conformal projection
$$t=z^\frac{2\pi i}{l}: H\longrightarrow {\CC}-\{0\}\, .$$
We consider the Fourier expansion of an eigenfunction $f$, $\Delta_l f+s(1-s)f=0$, for the hyperbolic Laplacian $\Delta_l$, $f$ invariant for $\sigma_l$.
$$f(z)=\sum_n f_n(\theta)r^{\bar{n}}, \quad \bar{n}=2i\pi n/l$$
in the coordinates (\ref{coor}), then
\begin{equation}\label{eqdif}
\frac{\D^2f_n}{\D\theta^2}+\left\{ \frac{s(1-s)}{\sin^2\theta}+\bar{n}^2\right\}f_n=0\, .
\end{equation}
Setting $f_n(\theta)=g_n(u), u=i\cot\theta$, then
$$\frac{\D^2g_n}{\D u^2}+ \frac{2u}{u^2-1}\frac{\D g_n}{\D u}+\left\{ \frac{s(1-s)}{u^2-1}-\bar{n}^2\frac{1}{(u^2-1)^2}\right\}g_n=0\, .$$
Fay observes that solutions can be given in terms of the  Gauss hypergeometric function $F$. We follow him (\cite{fayreine}, Formula (87)) with the
\begin{defn}\label{2.1}
We let
\begin{equation}\label{def1}
N_s^n(\theta)=(\sin\theta)^s\exp\left[i\theta\left(s+\bar{n}\right)\right]F\left(s+\bar{n},s,2s;-2i\sin\theta e^{i\theta}\right)
\end{equation}
be the unique solution to $ (\ref{eqdif})$  for which
\begin{equation}\label{def2}
\lim_{\theta\to 0}\theta^{-s}N_s^n(\theta)=\lim_{\theta\to \pi}(\pi -\theta)^{-s}N_s^n(\pi -\theta)=1\, .
\end{equation}
$N_s^n(\theta)$ is analytic for all $2s\not =-1,...$.
\end{defn}
To keep in mind:
\begin{nota}\label{not}
We will denote $R_l(s)$ the resolvent operator defined for $\Re s>1/2$, $s\not\in [1/2,1]$ by 
$R_l(s)=(\Delta_l+s(1-s))^{-1}$. 

Its kernel will be denoted by $G_s^l(z,w)$.
\end{nota}
In the following, we will need the
\begin{defn}\label{bound}
A family $(f_n)$ of meromorphic functions on a domain $\mathcal O$ in $\mathbb C$ is called bounded in $\mathcal O$ if
\begin{enumerate}
\item $\exists (z_m)_m$ a discrete subset in $\mathcal O$,
\item $\forall K$ compact set of ${\mathcal O}\backslash \{z_m\}$, $\exists n(K)$, $\forall n\geq n(K)$, $f_n$ has no pole in $K$,
\item $M_K=\sup_{n\geq n(K)}(\sup_{z\in K}|f_n(z)|)< +\infty$.
\end{enumerate}
\end{defn}

\begin{defn}\label{bound}
A family $(f_n)$ of meromorphic functions on a domain $\mathcal O$ in $\mathbb C$ is called bounded in $\mathcal O$ if
\begin{enumerate}
\item $\exists (z_m)_m$ a discrete subset in $\mathcal O$,
\item $\forall K$ compact set of ${\mathcal O}\backslash \{z_m\}$, $\exists n(K)$, $\forall n\geq n(K)$, $f_n$ has no pole in $K$, and the family $(f_n)_{n\geq n(K)}$ converges uniformly on $K$.
\end{enumerate}
\end{defn}

\medskip

For degenerating surfaces the convergence of small eigenvalues, that is eigenvalues in the range $[0,1/4)$ is well understood (\cite{coco}, \cite{he}, \cite{jit}, \cite{w2}, \cite{schulze}). This result, throughout this paper, is implicitly used.

\section{The compact case}
To be more precise, we return first  to the case of a family of degenerating {\it  compact} Riemann surfaces $(S_l)$ and we show, with the notations of Section \ref{not} 
\begin{prop}\label{3.1}
 The weighted hyperbolic Eisenstein series $(\frac{1}{l^s}E_l(z,s))_l$ converges uniformly on every compact  subsets of $S_0$ and $0<\Re s<1/2$, to zero, as a meromorphic family.
\end{prop}
Before proving this proposition, we need to recall some results on hyperbolic Eisenstein series. In the compact case $\delta_l=1$ and the hyperbolic Eisenstein series  is first given as a convergent series for $\Re s>1$ (Formula (\ref{formule})). The analytic continuation of $E_l$ has been carried on in \cite{kmi}, \cite{ris}, \cite{kramer}. Succently speaking it follows from the relation that links the hyperbolic Eisensein series to the resolvent:
$$\Delta_l E_l(z,s)+s(1-s)E_l(z,s)=s^2E_l(z,s+2)\, .$$
In the compact case
there exists a complete orthonormal basis $\{\psi_j\}$ for $L^2(S_l)$ such that  $\Delta_l \psi_j =\lambda_j \psi_j$, with 
$0=\lambda_0<\lambda_1\leq \lambda_2\leq ...\rightarrow \infty$.

The resolvent  $R_l(s):L^2(S_l)\rightarrow L^2(S_l)$ has a meromorphic continuation to the whole complex plane, more precisely it's analytic for $s(1-s)\not =\lambda_j$ where  $\lambda_j$ is an eigenvalue of the Laplacian, for $\lambda\not = \lambda_j$, $G_s^l(z,w)=\sum_{n=0}^\infty \frac{\psi_n(z)\overline{\psi_n(w)}}{\lambda-\lambda_n}$. The hyperbolic Eisenstein series admits  the spectral expansion
$$E_l(z,s)=\sum_{j=0}^\infty a_{j,l}(s) \psi_j(z)\, .$$
The coefficient $a_{j,l}$ is given by the formula
$$a_{j,l}(s)=\sqrt{\pi}\frac{\Gamma((s-\frac{1}{2}+ir_j)/2)\Gamma((s-\frac{1}{2}-ir_j)/2)}{\Gamma(s/2)^2}\int_{c_l} \psi_j(z)\, ds(z)\, ,$$
here we have written the eigenvalue $\lambda_j$ of the eigenfunction $\psi_j$ in the form $\lambda_j=\frac{1}{4}+r_j^2$, $(\psi_j)_{j\geq 0}$ is a complete orthonormal system of smooth eigenfunctions. In conclusion (\cite{ris}), the function $E_l(z,s)$ has meromorphic continuation to the whole $s$ plane. At a regular point $s_0$, $E_l(z,s)$ is square integrable on $\Gamma_l\backslash H$. The eventual poles are located at $-2n+s_j$ where $s_j(1-s_j)$ is an eigenvalue of the automorphic Laplacian and $n\in \NN$. They are simple, except in the eventual case $s_j=1/2$, of order 2. The pole at $s=1$ is simple with residue
$$\frac{2l}{\text{vol}(\Gamma_l\backslash H)}\,.$$

What can leave us pressing Proposition \ref{3.1}, are the following facts:
\begin{lemma}
\begin{enumerate}
\item The family $(\frac{1}{l^s}E_l(\pi_l(.),s))_l$ of meromorphic functions, is bounded in  $0<\Re s<1/2$. 
\item The residus at the $s\in]0,1/2[$  corresponding to small eigenvalues tend to zero.
\item The  expression of the scalar product $\<\frac{1}{l^{1-s}}E_l(z,1-s), \frac{1}{l^{\bar s}}E_l(z,\bar{s}) \>$.
\end{enumerate}
\end{lemma}

\begin{proof}
To show  point (1), the method used for $\Re s>1/2$, the method in \cite{preprint}, p. 367-370,  applies directly.\\
For point (2), it is necessary to emphasize on the dependence on $l$ of the eigenvalues. Let's write $s_j^{\pm}(l)=\frac{1}{2}\pm r_j(l)$ whith $0<r_j(l)\leq 1/2$.

Let $\lambda_j$ $0\leq \lambda_j<1/4$ the small eigenvalues on $S_0$. We know (see for example \cite{coco}) that for $l$ sufficiently small, for $0\leq j \leq M$, $\lambda_j(l)\to \lambda_j$, equivalently $r_j(l) \to r_j$.

 We use \cite{kramer} Theorem 2.
\begin{eqnarray*}
\Res_{s_j^+(l)}(\frac{1}{l^s}\Gamma(s/2)^2\Gamma(s-1/2)^{-1}E_l(z,s))&=&\frac{1}{l^{s_j^+(l)}}
\Gamma(s_j^+/2(l))^2\Gamma(r_j(l))^{-1}\Res_{s_j^+(l)}E_l(z,s)\\
 &=&2\sqrt{\pi}\psi_j(z)\int_{c_l}\psi_j(z)\, ds(z)\, ,
\end{eqnarray*}
which can represent, depending on the multiplicity of $\lambda_j(l)$, a finite sum.

We have a similar expression for the residu at $s_j^-(l)$. We deduce the relation
$$\Res_{s_j^-(l)}(\frac{1}{l^s}E_l(z,s))=l^{s_j^+(l) -s_j^-(l)}\Gamma(s_j^-(l)/2)^{-2}\Gamma(s_j^+(l)/2)^2\Gamma(r_j(l))^{-1}\Gamma(-r_j(l))\Res_{s_j^+(l)}(\frac{1}{l^s}E_l(z,s))\, ;$$
but we have  for $0\leq j\leq M$,  that $\Res_{s_j^+(l)}(\frac{1}{l^s}E_l(z,s)) \to \Res_{s_j^+}(E_\infty +E_0)$, then the result.\\
For point (3), let's write $s=\frac{1}{2}+r$ with $\Re r>1/2$. We compute
\begin{eqnarray*}
& &\<\frac{1}{l^{1-s}}E_l(z,1-s), \frac{1}{l^{\bar s}}E_l(z,\bar{s}) \>=\frac{1}{l}\sum_j a_{j,l}\left(\frac{1}{2}-r\right)  \overline{a_{j,l}\left(\frac{1}{2}+\bar{r}\right)}
=\frac{\pi}{l\Gamma^2(\frac{1/2+r}{2})\Gamma^2(\frac{1/2-r}{2})}\times\\
& &\sum_j \left| \int_{c_l}\psi_j(z) ds(z) \right|^2\, 
\Gamma\left(\frac{-r+ir_j}{2}\right)\Gamma\left(\frac{-r-ir_j}{2}\right)\Gamma\left(\frac{r-ir_j}{2}\right)\Gamma\left(\frac{r+ir_j}{2}\right)\, .
\end{eqnarray*}
We use know the relation $-w\Gamma(w)\Gamma(-w)=\frac{\pi}{\sin(\pi w)}$ to find

$\<\frac{1}{l^{1-s}}E_l(z,1-s), \frac{1}{l^{\bar s}}E_l(z,\bar{s}) \>=\frac{\pi^3}{l\Gamma^2(\frac{1/2+r}{2})\Gamma^2(\frac{1/2-r}{2})}
\sum_j \left| \int_{c_l}\psi_j(z) ds(z) \right|^2\frac{2}{r^2+r_j^2}\frac{1}{\cos(\pi r)+\cos(\pi ir_j)}\, ;$

and we recognize 
$$\left|\left|\int_{c_l} G_s^l(z,w)ds(z)\right|\right|^2_{L^2}=\sum_j\frac{| \int_{c_l}\psi_j(z) ds(z) |^2}{r^2+r_j^2}\, .$$

\end{proof}

We also  use \cite{fayreine}, Formula (99) that connects in particular  for any fuchsian group the hyperbolic Eisenstein series to the following  integral of the kernel of the resolvent
$I_l=\int_1^{e^l} G^l_s(z, iy')d \ln y'\, $.

Because this result will be important for the following sections, we write
\begin{lemma}\label{t}
For {\it any Fuchsian group} $\Gamma_l$, for $s$,  $\Re s>-1/2$, not a pole of  $G^l_s$ and $z$ not in the standard collar, we have
\begin{equation*}
\int_1^{e^l} G^l_s(z, iy')d \ln y'=-\frac{4^{s-1}}{\pi}\frac{\Gamma^4(s)}{\Gamma^2(2s)}N^0_{s}(\frac{\pi}{2})E_l(z,s)+H(z,s)
\end{equation*}
with $H(z,s)=O(1)E_l(z,s+2)$, $O(1)$ bounded independentely of $\Gamma_l$ and of $z$ in $\pi_l(C)$, $C$ a compact of $S_0$.
\end{lemma}
\begin{proof}
 More precisely,  \cite{fayreine}, Formula (99) gives, for $z$ not in the standard collar 
\begin{eqnarray*}
\int_1^{e^l} G^l_s(z, iy')d \ln y'&=&-\frac{4^{s-1}}{\pi}\frac{\Gamma^4(s)}{\Gamma^2(2s)}N^0_{s}(\frac{\pi}{2})e^{\frac{\pi i}{2}s}\sum_{\<\sigma_l\>\backslash \Gamma_l} (\sin \arg\gamma z)^s e^{\mp i  s(\arg \gamma z-\frac{\pi}{2})} \\
&  &.F(s,s,2s;\frac{2}{1\mp i\cot \arg\gamma z})\, .
\end{eqnarray*}
where the $\pm$ is chosen as $\mp(\arg\gamma z-\frac{\pi}{2})>0$. We can then verify that

$$  e^{\mp i  s(\arg \gamma z-\frac{\pi}{2})}=\left\{\begin{array}{cc} e^{-i s\pi/2}(\cos \arg\gamma z +i\sin\arg\gamma z)^s & \text{ if }\arg\gamma z<\pi/2\\ e^{-i s\pi/2}(-\cos \arg\gamma z +i\sin\arg\gamma z)^s & \text{ if }\arg\gamma z\geq \pi/2\end{array}\right .$$
Let $C$ be a compact of $S_0$, $\pi_l(C)\subset S_l\backslash C_l$.

$\forall \gamma\in \<\sigma_l\>\backslash \Gamma_l$, if $\arg\gamma z<\pi/2$, then $0<\arg\gamma z\leq l$;
  if $\arg\gamma z\geq \pi/2$, $0<\pi -\arg\gamma z\leq l$. In every case $0<\sin\arg\gamma z\leq \sin l$.

Moreover
\begin{eqnarray*}
\cos \arg\gamma z +i\sin\arg\gamma z&=&1+2\sin\frac{\arg \gamma z}{2}(-\sin\frac{\arg \gamma z}{2}+i\cos\frac{\arg \gamma z}{2})\\
\cos (\pi -\arg\gamma z) +i\sin(\pi -\arg\gamma z)&=&1+2\sin\frac{\theta}{2}(-\sin\frac{\theta}{2}+i\cos\frac{\theta}{2})\, \text{ with } \theta=\pi -\arg\gamma z\, .
\end{eqnarray*}
Now, if we  have $0<\alpha\leq l_0$, $\cos( l_0/2)\leq \cos(\alpha/2) <1$ and $\sin \frac{\alpha}{2}=O(\sin \alpha)$. We deduce that,  in the case $\arg\gamma z<\pi/2$, $\cos \arg\gamma z +i\sin\arg\gamma z=1+O(\sin\arg\gamma z/2)=1+O(\sin\arg\gamma z)$
and, in the case $\arg\gamma z\geq \pi/2$,  with always $\theta=\pi -\arg\gamma z$,  $\cos (\pi -\arg\gamma z) +i\sin(\pi -\arg\gamma z)=1+O(\sin\theta/2)= 1+O(\sin\theta)=1+O(\sin\arg\gamma z)$.

Then   $e^{\frac{\pi i}{2}s} e^{\mp i  s(\arg \gamma z-\frac{\pi}{2})}=(1+O(1)\sin\arg\gamma z)^s=1+O(1)\sin\arg\gamma z$, $O(1)$, $s-$holomorphic, bounded independently of $\Gamma_l$  and $z$ in $\pi_l(C)$, $C$ compact of $S_0$, for all $s$. More precisely 
\begin{equation*}
e^{\frac{\pi i}{2}s} e^{\mp i  s(\arg \gamma z-\frac{\pi}{2})}=1+2s\sin\frac{\theta}{2}(-\sin\frac{\theta}{2}+i\cos\frac{\theta}{2})+ O((\sin\arg\gamma z)^2), 
\end{equation*}
\begin{equation}\label{the}
  \text{ with } \theta=\pi -\arg\gamma z \quad \text{if  } \arg\gamma z\geq \pi/2,\quad  \theta=\arg\gamma z \quad \text{if not}.
\end{equation}

\medskip
It is well-known (see for example, \cite{h2}, p. 603) that $F(a,b;c;Z)/\Gamma(c)$ defines a holomorphic function for $(a,b,c,z)\in\CC\times \CC\times \CC\times (\CC-[1,\infty[)$.
Then $F(s,s,2s;Z)/\Gamma(2s)$ is $s-$holomorphic and  the eventual poles of $F(s,s,2s;Z)$ are in $\{n/2, n \in \ZZ, n \leq 0\}$. Moreover we easily verify that the set of poles is $\{n/2, n \in \ZZ, n < 0\}$.

 In the same way
$\frac{2}{1\mp i\cot\arg\gamma z}=2\frac{\sin\arg\gamma z}{\sin\arg\gamma z \mp i\cos\arg\gamma z}$ approaches $0$  and 
$$F(s,s,2s;\frac{2}{1\mp i\cot arg\gamma z})=1+\frac{s}{2}\left(\frac{2\sin\arg\gamma z}{\sin\arg\gamma z \mp i\cos\arg\gamma z}\right)+O((\sin\arg\gamma z)^2)$$
with a uniform  $O$-term for $s$ bounded, $\Re s>-1/2$, bounded independently of $\Gamma_l$ and $z$ in $\pi_l(C)$, $C$ compact of $S_0$.\\
Calculating the $s$ coefficient in the product 
$e^{\frac{\pi i}{2}s} e^{\mp i  s(\arg \gamma z-\frac{\pi}{2})} \, F(s,s,2s;\frac{2}{1\mp i\cot arg\gamma z})$, we obtain, with the same definition of $\theta$ as in (\ref{the}): $2(\sin \frac{\theta}{2})^2 e^{i\theta}$.
Then $e^{\frac{\pi i}{2}s} e^{\mp i  s(\arg \gamma z-\frac{\pi}{2})} \, F(s,s,2s;\frac{2}{1\mp i\cot arg\gamma z})= 1+ O(1)(\sin \arg\gamma z)^2$ with a uniform  $O(1)$-term for $s$ bounded, $\Re s>-1/2$, bounded independently of $\Gamma_l$ and $z$ in $\pi_l(C)$, $C$ compact of $S_0$.\\

\medskip

Then, for $\Re s >1$

%

\begin{equation}
I_l +\frac{4^{s-1}}{\pi}\frac{\Gamma^4(s)}{\Gamma^2(2s)}N^0_{s}(\frac{\pi}{2})E_l(z,s)=  -\frac{4^{s-1}}{\pi}\frac{\Gamma^4(s)}{\Gamma^2(2s)}N^0_{s}(\frac{\pi}{2})  \sum_{\Gamma_l\backslash \Gamma} (\sin \arg\gamma z)^s O(1)(\sin \arg\gamma z)^2\, .
\end{equation}

The right member of the last equality is a convergent and analytic series for $\Re s>-1$.
Then by analytic continuation we can deduce that for $\Re s >-1/2$, $s$ not a pole of  $G^l_s$
\begin{eqnarray*}
I_l &=&-\frac{4^{s-1}}{\pi}\frac{\Gamma^4(s)}{\Gamma^2(2s)}N^0_{s}(\frac{\pi}{2})E_l(z,s)  -\frac{4^{s-1}}{\pi}\frac{\Gamma^4(s)}{\Gamma^2(2s)}N^0_{s}(\frac{\pi}{2})  \sum_{\Gamma_l\backslash \Gamma} (\sin \arg\gamma z)^s O(1)(\sin \arg\gamma z)^2\\
&=&-\frac{4^{s-1}}{\pi}\frac{\Gamma^4(s)}{\Gamma^2(2s)}N^0_{s}(\frac{\pi}{2})E_l(z,s) + H(z,s)\, .
\end{eqnarray*}
%
where $H(z,s)=0(E_l(z,s+2))$.
\end{proof}

\begin{rem}
This Lemma \ref{t} will be improved in the infinite volume case.
\end{rem}
We are now able to prove  Proposition \ref{3.1}.
\begin{proof}
 In the compact case we have $G_s=G_{1-s}$, so we obtain, from Lemma \ref{t}, in particular  for $0<\Re s <1/2$,
\begin{equation}\label{compact}
E_l(z,s)=K(s) E_l(z,1-s)+O(E_l(z,3-s))+ O(E_l(z,s+2)),
\end{equation} 
with $K(s)$  depending only on $s$.
Then dividing by $1/l^s$ and using the known  results on the convergence of hyperbolic Eisensteins series (see \cite{preprint}) we have

$\displaystyle\frac{E_l(z,s)}{l^s}=O(l^{1-2s})$, and the result.

\end{proof}

\begin{rem}
In view of Lemma \ref{t}, we can also notice  the formula  in \cite{wave}, Proposition 11.
\end{rem}

When $S_l$ is compact, you have to give a sense to the convergence of $(\frac{1}{l^s}E_l(z,s))_l)$ on a domain that intersect $\Re s=1/2$ because the eigenvalues of $S_l$ cluster at every point of the continuous spectrum $[\frac{1}{4}, +\infty[$ of $S_0$ as $l\to 0$ (see for example \cite{jit}). To remove this problem we multiply the hyperbolic Eisenstein series by the Selberg Zeta function to obtain  holomorphic functions. More precisely, we recall (see for example \cite{h1}, p.72) that in the compact case
\begin{enumerate}
\item  $Z_l(s)$ is actually an entire function;
\item $Z_l(s)$ has "trivial" zeros $s=-k$, $k\geq 1$, with multiplicity $(2g-2)(2k+1)$;
\item $s=0$ is a zero of multiplicity $2g-1$;
\item $s=1$ is a zero of multiplicity 1;
\item the nontrivial zeros of $Z_l(s)$ are located at $\frac{1}{2}\pm i r_j$ of order for $r_j\not = 0$, equal to the dimension of the $\lambda_j$-eigenspace except for $s=1/2$ where $Z_l(s)$ has a zero of multiplicity equal to twice the dimension of the $(1/4)$-eigenspace.
\end{enumerate}
It follows that  for $\Re s >0$, poles of  the hyperbolic Eisenstein series are zeros of the Selberg Zeta function and the multiplicity of a pole of $E_l(z,s)$ is less or equal the multiplicity of the corresponding zero of $Z_l(s)$, then 
\begin{lemma}
The product $Z_l(s) E_l(z,s)$ is a  holomorphic function in $\Re s >0$.
\end{lemma}

\begin{cor}
The family $E^*_l(z,s)=\frac{Z_l(s)}{z_l(s)}\frac{1}{l^s}E_l(z,s)$ cannot converge for $0<\Re s <1/2$ as $l\to 0$.
\end{cor}
\begin{proof}
To see this, from Proposition \ref{3.1} and the preceding study (\ref{compact}) , $\frac{1}{l^s}E_l(z,s)= O(l^{1-2s})$ on $0<\Re s <1/2$. 
Moreover we know (see \cite{wp}) that  on $0<\Re s <1/2$,
\begin{equation}\label{finite}
\frac{Z_l(s)}{z_l(s)}= \frac{Z_l(1-s)}{z_l(1-s)} O(l^{4s-2})\, . 
\end{equation}
Then $\frac{Z_l(s)}{z_l(s)}\frac{1}{l^s}E_l(z,s)=O(l^{2s-1})$. Simply observe that for $0<\Re s <1/2$ and $l\to 0$ the factor $l^{2s-1}$ is divergent.
\end{proof}

\section{The non-compact finite volume case}

Here again we show that the meromorphic family, $( \frac{1}{l^s}E_l(z,s))$ tends to zero with $l$  on $0<\Re s <1/2$. For this purpose we will follow the same method as the preceding case.

Doing this we will find again the convergence of the scattering matrix in the finite volume (non compact) case  on every compact of $\Re s>1/2\backslash \{s_0,...,s_{M}\}$, where these $s_j$ correspond to the small eigenvalues of the limit surface and this will enable us to lift a condition of Schulze \cite{schulze}, p.122.
%

In this case $S_l$,  being a noncompact finite-volume  hyperbolic surface,  has  finitely many cusps and Eisenstein series associated to them. The Laplacian on $\Delta_l$ has absolutely continuous spectrum $[1/4,\infty)$ with multiplicity the number of cusps of $S_l$. The discrete spectrum consists of finitely many eigenvalues in $[0,1/4)$: $\lambda_n=s_n(1-s_n)$,  where $s_n=\frac{1}{2} +i r_n$, $\tilde{s_n}=\frac{1}{2} -i r_n$, $s_n\in [\frac{1}{2},1]\cup [\frac{1}{2},\frac{1}{2}+i\infty)$. There are examples with infinitely many embedded eigenvalues in $[1/4,\infty)$.
The theory of Eisenstein series and their analytic continuation furnishes the continuous spectrum.
 Without loss of generality we can assume that there is only one cusp $p$ with $E_p^l(z,s)$ the associated Eisentein series.
 The constant term in the Fourier expansion of the Eisenstein series, $\phi_l(s)$ is meromorphic in $\mathbb C$. Its only poles in $\Re s \geq 1/2$ are in $(\frac{1}{2},1]$; these poles $\{\rho_0,\rho_1,...,\rho_{M_e}^l\}$  are finite and simple. They coincide with the poles of $E_p^l(z,s)$ in $\Re s \geq 1/2$.
The residues at these poles furnish solutions to the eigenvalue problem, called the residual spectrum of $S_l$. The poles of $\phi_l(s)$ in $\Re s <1/2$ yield resonances .
\begin{eqnarray*}
L^2(S_l)&=&{\mathcal H}_{res}+ {\mathcal H}_{cusp}+{\mathcal H}_{cont}\\
&=&\sum_{0\leq n\leq M_e^l}\<\psi_n\> +\sum_{n>M_e^l}\<\phi_n\>+{\mathcal H}_{cont}\\
\Delta_l \psi_n +\rho_n(1-\rho_n)\psi_n=0\, ,& \quad &\Delta_l \phi_n +c_n(1-c_n)\phi_n=0\, .
\end{eqnarray*}

The orthogonal complement in $L^2(S_l)$ of the continuous and residual spectrum is the cuspidal space. It is invariant under $\Delta_l$, and the resolvent is compact when restricted to the cuspidal space. 

 The resolvent  $R_l(s):L^2(S_l)\rightarrow L^2(S_l)$ has a meromorphic continuation to the whole complex plane. More precisely (\cite{h2}, p.250), the singularities of the kernel of the resolvent correspond to $\{s_n,\tilde{s_n}\}\cup\{1/2\}\cup \{\text{ the poles of } \phi_l \}$. $\{s_n,\tilde{s_n}\}$ are simple poles  with the  possible exception of the pole at  $s=1/2$.

The kernel of the resolvent is given for $\Re s >1/2$  by
$G_s^l(z,w)=\sum_{n=0}^\infty \frac{\psi_n(z)\overline{\psi_n(w)}}{\lambda-\lambda_n} +\frac{1}{2\pi}\int_0^{+\infty} \frac{E_p(z,\frac{1}{2} +ir)E_p(w,\frac{1}{2} -ir)}{s(1-s)-(\frac{1}{4} +r^2)}\, dr$.

We return now to the study of the hyperbolic Eisenstein series.
The hyperbolic Eisenstein series admits  the spectral expansion
$$E_l(z,s)=\sum_{j=0}^\infty a_{j,l}(s) \psi_j(z) +\frac{1}{4\pi}\int_{-\infty}^{+\infty} a_{1/2 +ir,l}(s)E_p^l(z, 1/2+ir)\, dr\, .$$
The coefficient $a_{j,l}$ is given by the formula
\begin{eqnarray*}
a_{j,l}(s)&=&\sqrt{\pi}\frac{\Gamma((s-\frac{1}{2}+ir_j)/2)\Gamma((s-\frac{1}{2}-ir_j)/2)}{\Gamma(s/2)^2}\int_{c_l} \psi_j(z)\, ds(z)\\
a_{1/2 +ir,l}(s)&=&\sqrt{\pi}\frac{\Gamma((s-\frac{1}{2}+ir)/2)\Gamma((s-\frac{1}{2}-ir)/2)}{\Gamma(s/2)^2}\int_{c_l} E_p^l(z,1/2+ir)\, ds(z)
\end{eqnarray*}
The eventual poles are located at the points
\begin{enumerate}
\item $s=1/2 \pm i r_j-2n, n\in \NN$ and they are simple except in the eventual case $s_j=1/2$, of order 2.
\item $s=\rho -2n$, where $n\in\NN$ and $t=\rho$ is a pole of the Eisenstein series $E_p^l(z,t)$ with $\Re(\rho)<1/2$.
\end{enumerate}

\bigskip

We will prove that
\begin{prop}\label{4.1}
 $\frac{1}{l^s}E_l(z,s)= O(l^{1-2s})$ on $0<\Re s <1/2$, such that $s(1-s)$ belongs to the resolvent set of the Laplacian on $S_0$.
\end{prop}
Before we need
\begin{lemma}
On $\Re s<1/2$, as a family of meromorphic function, $\phi_l(s)$  tends  to $\phi_0(s)$
\end{lemma}
\begin{proof}
We know the convergence of  the family of $s$-meromorphic functions $( E_p^l(\pi_l(w),s))_l$  to $E_p^0(w,s)$  uniformly  on any compact subset of $S_0$ (see \cite{o4}, \cite{jorg} for $\Re s >1$, \cite{schulze} and \cite{hc} for $\Re s>1/2$).

As $\int_0^1 E_p^l(z,s)\, dx=y^s+\phi_l(s)y^{1-s}$ we deduce the convergence of $(\phi_l(s))$ to $\phi_0(s)$ on every compact of $\Re s>1/2\backslash \{s_0,...,s_{M_e}\}$ (see also \cite{h2}, Proposition 12.5). By Hurwitz's theorem, the zeros of $\phi_l$ converge to those of $\phi_0$. Then, for $\Re s>1/2$ $\displaystyle\left( \frac{1}{\phi_l(s)}\right)$ is a meromorphic family converging to  $\displaystyle \frac{1}{\phi_0(s)}$.

Now for $\Re s<1/2$, $\Re( 1-s)>1/2$ and for $l\geq 0$, $\phi_l(s)=1/\phi_l(1-s)$, so the conclusion.
\end{proof}
\begin{cor}
  $E_p^l(z,s)$ for $\Re s<1/2$,  tends to $E_p^0(z,s)$.
\end{cor}

\begin{proof} of Proposition \ref{4.1}.
 In the finite volume non compact case, the resolvent is no more symetric and we have the relation
$$G_s(z,iy')=G_{1-s}(z,iy')-\frac{1}{2s-1} E_p^l(z,s)E_p^l(iy',1-s)\, .$$

Hence we have for $0<\Re s <1/2$
\begin{equation}\label{rt}
\int_1^{e^l} G_s(z, iy',l)d \ln y'=\int_1^{e^l} G_{1-s}(z, iy',l)d \ln y' -\frac{1}{2s-1}E_p^l(z,s)\int_1^{e^l} E_p^l(iy',1-s)d \ln y'\, .
\end{equation}
For $0<\Re s<1/2$, 
$\displaystyle\int_1^{e^l} E_p^l(iy',1-s)d \ln y'=lO(1) $, see \cite{schulze}  Definition 19, 20 pp. 138-139 and Theorem 30 p. 147.

Dividing (\ref{rt}) by $l^s$ and using, like in the compact case, Lemma (\ref{t}),
the  conclusion follows from the preceding study.
\end{proof}

As in the compact case we multiply the hyperbolic Eisenstein series by the Selberg Zeta function to obtain  meromorphic functions,    holomorphic on $\Re s>0\backslash \{1-\rho_k, 0\leq k\leq M_e^l, 1/2\}$. More precisely, we recall (see for example \cite{h2}, p.498) that in the finite volume non compact case
\begin{enumerate}
\item  $Z_l(s)$ is meromorphic on the $s$-plane;
\item $Z_l(s)$ has "trivial" zeros $s=-k$, $k\geq 1$.
\item the nontrivial zeros of $Z_l(s)$ are located at points $s_j$, on the real line $\Re s=1/2$, $s_j=\frac{1}{2}\pm i r_j$ of multiplicity, for $r_j\not =0$, equal to the dimension of the $\lambda_j$-eigenspace. 
\item for $s=1/2$  $Z_l(s)$ has a zero  or pole of multiplicity equal to twice the dimension of the $(1/4)$-eigenspace minus the number of inequivalent cusps $\frac{1}{2}\text{Tr}[I -\phi_l(\frac{1}{2})]$;
\item  the nontrivial zeros of $Z_l(s)$ are located for $\Re s > 1/2$ at points $s_j$ and $\rho_k$, $0\leq k\leq M^l_e$ in $]\frac{1}{2},1]$ of multiplicity equal to the dimension of the corresponding eigenvalue;
\item it has additional non-trivial zeros in $\Re s<1/2$
 at the poles of $\phi_l(s)$ with the same multiplicity.
\item $Z_l(s)$ has poles at $s=-l+1/2$, $l\geq 1$, of order 1. 
\end{enumerate}

\medskip

In order to write the functional equation, we need the entire function $G_\infty$  which is defined in terms of the Barnes G-function (\cite{borth},p. 44)
\begin{eqnarray}\label{Barnes}
G(s+1)&=&(2\pi)^{s/2}e^{-s/2-(\gamma+1)s^2/2}\displaystyle\Pi_{k=1}^\infty \left(1+\frac{s}{k}\right)^ke^{-s+s^2/(2k)}\, ,\\
G_\infty(s)&=&(2\pi)^{-s}\Gamma(s)G(s)^2
\end{eqnarray}
We then have a relation between $Z_l(s)$ and $Z_l(1-s)$ in the finite volume and  non compact case (see for example \cite{borth}, p. 45, also \cite{h2}, formula (5.7) p.499) 
$$\frac{Z_l(s)}{Z_l(1-s)}=\left(\frac{G_\infty(s)}{G_\infty(1-s)}\right)^{-\chi(S_l)}\, .$$

It allows us to answer the remark in the same manner as in the compact case, because we have again $\frac{Z_l(s)}{z_l(s)}= \frac{Z_l(1-s)}{z_l(1-s)} O(l^{4s-2})$.
\begin{lemma}
The quotient $\displaystyle\frac{Z_l(s)}{z_l(s)}$ cannot converge for $\Re s<\frac{1}{2}$ as $l\to 0$.

\end{lemma}

 Then, from Proposition (\ref{4.1}), $\frac{Z_l(s)}{z_l(s)}\frac{1}{l^s}E_l(z,s)=O(l^{2s-1})$  and
\begin{cor}
The meromorphic family, $E^*_l(z,s)=\frac{Z_l(s)}{z_l(s)}\frac{1}{l^s}E_l(z,s)$ cannot converge for $0<\Re s <1/2$ as $l\to 0$.
\end{cor}

\begin{rem}
In the finite volume case, looking at the particular point s = 1/2, we could already conclude
that the convergence of the family $\frac{Z_l(s)}{z_l(s)}  \frac{1}{l^s}E_{l}(z,s)$  to the desired series is not true in general:
in the case $Z_0$ has a pole at $s=1/2$ which occurs when twice the dimension of the 1/4 eigenvalue space minus $1/2 \text{ Tr}[ I-\Phi(1/2)] <0$.
\end{rem}

\section{The infinite volume case}

For the remainder of this section, assume, for simplicity, that $\Gamma$ is a Fuchsian group of the second kind whose fundamental domain $\mathcal D$ has no cusps and only  one free side $\overline{\mathcal D}\cap \RR=[1,e^l]$ corresponding to the cyclic subgroup  generated by $\sigma_l(z)=e^l z$. The corresponding funnel is denoted by $F_l$.\\
A collar $\mathcal C$ for $\sigma_l$ is the cylinder
$${\mathcal C}=\{z=re^{i\theta}, 1\leq r \leq e^l, cl<\theta<\pi-cl\}/ \<z\to e^l z\>$$
for a choice of constant $c$. For $l$ small the standard collar, $c=1$, embeds in $S_l$ to give a neighborhood of $\sigma_l$, and has: area $\approx 2$, width $\approx 2\log\frac{1}{l}$, and each boundary of length $\approx 1$. 

Here also, we adapt the notations in Borthwick \cite{borth}.

The Poisson operator is the map 

$$E_{S_l}(s): C^{\infty}(\partial\overline{S_l})\longrightarrow C^{\infty}(S_l)$$

which maps $f\in  C^{\infty}(\partial\overline{F_l})$ to

$$E_F(s)f(\rho,t)=\int_0^l E^{f_l}(s; \rho,t,t')f(t')\, dt'\, ;$$
where 

$$E^{f_l}(s,z,t')=(1-2s)\lim_{\rho '\to 0} {\rho'}^{-s}G_s(z,z')\, ;$$
and  $S_l(s)$ is the corresponding scattering matrix. The funnel scattering operator $S_{F_l}$ is defined in the same way.

The relative scattering determinant $\tau_{S_l}(s)$ is a Fredholm determinant. $S^l_{rel}(s)-I$ is a smoothing operator, with $S^l_{rel}(s)=S_{F_l}(1-s)S_{l}(s)$.

$$\tau_{S_l}(s)=\det S^l_{rel}(s)=\Pi_{k=1}^\infty \left(1+\lambda_k(S^l_{rel}(s)-I)\right)$$

where $\lambda_k(S^l_{rel}(s)-I)$ are the eigenvalues of $A_l:=S^l_{rel}(s)-I$.

\begin{thm}\label{clef}
For $Re s>1/2$ we have
$$\tau_{S_l}(s)\to \frac{1}{2(\sin\pi/2 s)^2}, l\to 0, .$$
\end{thm}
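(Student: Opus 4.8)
The plan is to reduce, via the funnel functional equation, the Fredholm determinant to the off-model part of the surface scattering, and then to evaluate its $l\to 0$ limit by a purely local (collar) analysis. First I would record that the model funnel operator satisfies $S_{F_l}(1-s)S_{F_l}(s)=I$ in each Fourier mode on the boundary circle $\partial\overline{F_l}$ (of circumference $l$), in which $S_{F_l}(s)$ is exactly diagonal. Writing $S_l(s)=S_{F_l}(s)+R_l(s)$, where $R_l(s)$ is the smoothing remainder measuring the difference between the true scattering of $S_l$ and the model funnel, we obtain $A_l=S^l_{rel}(s)-I=S_{F_l}(1-s)R_l(s)$, so that $\tau_{S_l}(s)=\det\!\left(I+S_{F_l}(1-s)R_l(s)\right)$ is governed entirely by $R_l$. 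In particular the model funnel contributes trivially, which already signals that the limit, if it exists, must be \emph{universal}: it cannot depend on the compact part of $S_l$.

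Next I would compute $\lim_{l\to 0}A_l$. As the core geodesic is pinched, the collar separating $F_l$ from the compact core becomes infinitely long, so the funnel end decouples and $R_l(s)$ is asymptotically determined by the degenerating collar alone. Passing to the limit in the resolvent kernel $G_s$ (from which $E^{f_l}$, hence $S_l(s)$, is built) identifies $\lim_{l\to 0}A_l$ with the analogous operator of the universal limit model, namely the parabolic cylinder $\langle z\mapsto z+1\rangle\backslash H$, which carries one funnel end and one cusp. Fourier-decomposing there, the radial equation is of Bessel/Whittaker type; for $n\neq 0$ the ratio of the parabolic-cylinder eigenvalue to the (degenerating) model-funnel eigenvalue tends to $1$ at a rate summable in $n$ by Stirling's asymptotics, so these modes drop out of the determinant, while the zero mode, where the radial solutions are $y^{s}$ and $y^{1-s}$, yields after the reflection formula $\Gamma(s/2)\Gamma(1-s/2)=\pi/\sin(\pi s/2)$ precisely the factor $\frac{1}{2(\sin\pi s/2)^2}$. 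Assembling the modewise limits gives $\tau_{S_l}(s)\to \frac{1}{2(\sin\pi s/2)^2}$.

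The hard part will be the decoupling step: one must show, uniformly for $\Re s>1/2$ on compacta, that the contribution to $R_l(s)$ coming from the compact core of $S_l$ is negligible in trace norm as $l\to 0$, so that only the collar survives in the limit. This rests on uniform (in $l$) parametrix and resolvent estimates for the degenerating family, exploiting that the collar length tends to infinity, and it is the genuinely analytic heart of the argument, the place where the geometry of pinching enters. A secondary point, controlled by the same Stirling bounds, is the dominated-convergence estimate $\sup_{0<l<l_0}\sum_n|\lambda_n(A_l)|<\infty$ needed to interchange $l\to 0$ with the infinite product defining $\det(I+A_l)$. Granting these two ingredients, the modewise limits of the previous paragraph assemble into the claimed universal constant $\frac{1}{2(\sin\pi s/2)^2}$.
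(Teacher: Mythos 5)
Your opening reduction is sound: since the funnel eigenvalues satisfy $\gamma_k(s)\gamma_k(1-s)=1$, one indeed has $S_{F_l}(1-s)S_{F_l}(s)=I$, hence $A_l=S_{F_l}(1-s)R_l(s)$ with $R_l(s)=S_l(s)-S_{F_l}(s)$ smoothing, and your instinct that the limit must be universal is consistent with the statement. But there is a genuine gap at the center of the argument, and it is not only the (admitted) deferral of the decoupling estimates. The step ``passing to the limit in the resolvent kernel $G_s$ identifies $\lim_{l\to 0}A_l$ with the analogous operator of the parabolic cylinder $\langle z\mapsto z+1\rangle\backslash H$'' is ill-posed for two reasons. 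First, the parabolic cylinder has no funnel end (its infinite-area end contains no closed geodesic), so there is no model operator $S_F$ on it and no ``analogous'' relative scattering operator $S_F(1-s)R(s)$ to serve as the limit: Borthwick's relative framework, which you are using, is tied to the funnel model, and that model disappears in the limit. Second, and more seriously, the two factors of $A_l$ degenerate separately as $l\to 0$: with $\bar k=2\pi i k/l$, Stirling gives $|\gamma_k(s)|\asymp (|k|/l)^{2\Re s-1}\to\infty$ and $\gamma_k(1-s)=1/\gamma_k(s)\to 0$ for every $k\neq 0$ when $\Re s>1/2$. So in each nonzero mode $A_l$ is a product of a factor tending to $0$ with a factor (containing $-S_{F_l}(s)$, whose diagonal blows up) that need not stay bounded; the limit is an indeterminate cancellation, and no factor-by-factor passage to the limit in $G_s$ can resolve it. Controlling exactly this cancellation, uniformly and summably in $k$, is the content of the theorem. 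The paper handles it by expanding $\tau_{S_l}(s)=\sum_{k=0}^\infty \tr \Lambda^k A_l$ and estimating these traces at finite $l$ with Fay's explicit Fourier expansion of the resolvent in the coordinates adapted to $\sigma_l$, combined with the explicit eigenvalues $\gamma_k(s)$; it never needs a limiting model surface at all.

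The second gap is the evaluation of the constant. You assert that the zero mode ``yields after the reflection formula $\Gamma(s/2)\Gamma(1-s/2)=\pi/\sin(\pi s/2)$ precisely the factor $\frac{1}{2(\sin\pi s/2)^2}$,'' but no computation is given, and the cited formula produces $\pi/\sin(\pi s/2)$, not $\frac{1}{2}\sin^{-2}(\pi s/2)$: the factor $\frac{1}{2}$ and the square are unaccounted for. Note that the natural source of the square is the funnel eigenvalue at $k=0$, namely $\gamma_0(s)=\frac{\Gamma(\frac{1}{2}-s)\,\Gamma(\frac{s+1}{2})^2}{\Gamma(s-\frac{1}{2})\,\Gamma(\frac{2-s}{2})^2}$, where the two gamma ratios coincide because $\bar k=0$; this is an $l$-independent ingredient of the funnel model, hence again something that cannot be recovered from the parabolic cylinder alone, on which your limiting computation is supposed to take place. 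Even granting your two deferred estimates (core negligible in trace norm, dominated convergence in $n$), what they would yield is that the determinant collapses onto the zero mode, i.e.\ $\tau_{S_l}(s)\to 1+\gamma_0(1-s)\lim_l (R_l(s))_{00}$; identifying that limit with $\frac{1}{2\sin^2(\pi s/2)}$ still requires knowing the limit of the surface's zero-mode scattering coefficient, which your proposal nowhere computes.
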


We recall the following eigenfunction expansion (see \cite{fayreine}  formula (98)')
\begin{equation}
 G_s^l(z,z')=\frac{1}{1-2s}\sum_{n\in \ZZ}F_n^l(z,s)N^n_s(\phi')|z'|^{\bar{n}}
\end{equation}
when $\arg z> \arg z'=\phi'$ and $z'$ in the collar; here
\begin{equation}\label{7}
F_n^l(z,s)=\frac{4^{s-1}}{\pi}\frac{\Gamma(s)^2\Gamma(s+\bar{n})\Gamma(s-\bar{n})}{\Gamma(2s)\Gamma(2s-1)}\frac{1}{l}\sum_{\Gamma_l\backslash \Gamma}N_s^{-n}(\pi -arg\gamma z)|\gamma z|^{-\bar{n}}\, ,
\end{equation}
with $\Re s >1$ to ensure convergence of the series.

We recall know the standard Fourier expansion for the resolvent of the limit surface $S_0=\Gamma\backslash H$, $\Gamma$ containing a parabolic subgroup $\Gamma_\infty$ generated by the transformation $z\mapsto z+1$ (see \cite{fayreine}  formula (73), \cite{h2}, p. 42) .
For $y'>y$, the resolvent has an eigenfunction expansion
\begin{equation}\label{li}
G_s(z,z')=\frac{1}{1-2s}\sum_n F_n(z,s) W_{s-\frac{1}{2}}(4\pi|n|y')e^{2\pi i nx'}
\end{equation}
where the $n^{th}$ Fourier coefficient is given by
\begin{equation}
F_n(z,s)=\frac{1}{4\pi}\frac{\Gamma(s)}{|n|\Gamma(2s-1)}\sum_{\gamma\in \Gamma_\infty\backslash \Gamma}M_{s-\frac{1}{2}}(4\pi|n|\Im \gamma z)e^{-2\pi i n \Re \gamma z}\, ,
\end{equation}
with the convention that the constant term in (\ref{li}) is $y'^{1-s} E_\infty(z,s)$.

\smallskip

We begin with estimating the Fourier coefficients of the resolvent of the laplacian on $S_l$. 
We are now going to establish the convergence of the Fourier coefficients for $\Re s >1$.
Following straight fully the method to study the degeneration of hyperbolic Eisenstein series like in \cite{preprint}, or \cite{pubprims},
 we find that for $\Re s>1$
\begin{prop}
\begin{enumerate}
\item The family $(l^{1-s}F_0^l(., s))_l$ converges uniformly on compact subsets of $S_0$ and on compact subsets of $\Re s >1$ to $ \frac{4^{s-1}}{\pi}\frac{\Gamma(s)^4}{\Gamma(2s)\Gamma(2s-1)}\frac{1}{l^{s+1}} E_\infty(.,s)$.
\item For $n\not =0$, the family $(l^{s} e^{\frac{2|n|\pi^2}{l}} F_n^l(.,s))_l$ converges uniformly on compact subsets of $S_0$ and on compact subsets of $\Re s >1$ to

\end{enumerate}

\end{prop}

\begin{proof}
To study the left side of the collar for $\sigma_l$, use the change of variables $l\zeta=-\log(-z)$, with the principal branch; then 
\begin{equation}
\arg z=\pi -l\Im \zeta, \quad |z|=\exp(-l\Re \zeta).
\end{equation}
We briefly recall the method  and refer to \cite{preprint}, \cite{pubprims}, for more details.

Conjugate $\Gamma_l$ by the map $\zeta$ to obtain ${\tilde\Gamma}_l$ acting on ${\mathcal S}_l=\{\zeta, 0<\Im \zeta<\pi/l\}$.

Given $\epsilon>0$, denote by $G_\epsilon$ the set of cosets and
representatives for $\<z\mapsto z+1\>\backslash \Gamma$ such that $\sup
\Im A(\mathcal F)<\epsilon$  for $[A]\not \in G_\epsilon$ and let $R_l$ be
the corresponding cosets of $\<z\mapsto z+1\>\backslash {\tilde\Gamma}_l$
with the corresponding representatives. 
The set $G_\epsilon$ is finite and so is $R_l$.  We have
$$\sum_{\<\sigma_l\>\backslash \Gamma_l}N_s^0(\pi -arg\gamma z)=\sum_{\<z\to z+1\>\backslash{\tilde\Gamma}_l- R_l}N_s^0(l\Im\delta \zeta)
 + \sum_{R_l}N_s^0(l\Im\delta \zeta)\, .$$
By  formula (\ref{def1}), as $l\to 0$, $N_s^0(l\Im\delta \zeta)\sim l^s \Im^s\delta\zeta$; thus for $\frac{1}{l^s}\sum_{\<\sigma_l\>\backslash \Gamma_l}N_s^0(\pi -arg\gamma z)$, in the last equality the second  sum is uniformly close to the corresponding term for $E_\infty(.,s)$.

 For the first sum recall that
$$N_s^0(\theta)=(\sin\theta)^s\exp(i\theta s)F\left(s,s,2s;-2i\sin\theta e^{i\theta}\right)\, .$$


With $\theta=\pi -\arg \gamma z=l\Im\delta \zeta$,
and that, for $\delta \in \<z\to z+1\>\backslash{\tilde\Gamma}_l- R_l$, $\Im \delta \zeta\leq 2\epsilon$, then $l\Im \delta \zeta \to 0$ uniformly in $\delta \in  \<z\to z+1\>\backslash{\tilde\Gamma}_l- R_l$, we obtain that
  $$\frac{1}{ l^s}\sum_{\<z\to z+1\>\backslash{\tilde\Gamma}_l- R_l}N_s^0(l\Im\delta \zeta) =O(\epsilon^{s-1})$$

see also \cite{jorg}. Now



$$F_0^l(z,s)=\frac{4^{s-1}}{\pi}\frac{\Gamma(s)^4}{\Gamma(2s)\Gamma(2s-1)}\frac{1}{l}\sum_{\Gamma_l\backslash \Gamma}N_s^0(\pi -arg\gamma z)$$
and by the preceding change of variables
$(l^{s-1}F_0^l(.,s))_l$  tend to $$\frac{4^{s-1}}{\pi}\frac{\Gamma(s)^4}{\Gamma(2s)\Gamma(2s-1)}\sum_\delta \Im (\delta \zeta) ^s\, $$
uniformly on compact subsets of $S_0$ and on compact subsets of $\Re s >1$.

In a similar way
$$ N_s^n(\theta)=(\sin\theta)^s\exp\left[i\theta\left(s+\bar{n}\right)\right]F\left(s+\bar{n},s,2s;-2i\sin\theta e^{i\theta}\right)  $$
\end{proof}

We have the formula

$$\frac{1}{1-2s}F_n^l(z,s)N_s^n(\phi')=\frac{1}{l}\int_1^{e^l}G_s^l(z,z')|z'|^{-\bar{n}}\, d\ln|z'|$$
with the change of variable $t'=\ln|z'|$  we obtain

$$\frac{1}{1-2s}F_n^l(z,s)N_s^n(\phi')=\frac{1}{l}\int_0^l G_s^l(z,z')e^{-2i\pi n t'/l}\, d t'\, .$$
Using the change of variables in Schulze $t'=-lx'$ we write

$$\frac{1}{l}\int_0^l G_s^l(z,z')e^{-2i\pi n t'/l}\, d t'=\int^0_{-1} G_s^l(z,z')e^{2i\pi n x'}\, d x'\, $$

With the convergence of the resolvent (see Schulze prop 6, p.126), we have the convergence of $\int_0^1 G_s^l(z,z')e^{-2i\pi n\Re w'}\, d\Re w'\, $ to 
\begin{enumerate}
\item for $n\not =0$, $F_n(w,s) W_{0, s-\frac{1}{2}}(4\pi|n|y')$
\item for $n=0$, $\frac{1}{1-2s}\sum_{\gamma} (\Im \gamma w)^s =\frac{1}{1-2s} E_\infty(w,s) $

\end{enumerate}

We can then write $\phi'=\pi -la$ and then use

\begin{enumerate}
\item for $n\not =0$, $N_s^n(\pi -la)=O( l^s e^\frac{2|n|\pi^2}{l})$
\item for $n=0$, $N_s^n(\pi -la)=O( l^{1-s})$

\end{enumerate}

We will use
\begin{rem}
$\frac{\pi 4^{1-s}\Gamma(2s)\Gamma(2s-1)}{ \Gamma(s)^2 }=\pi \frac{4^s}{2}\frac{\Gamma(s-\frac{1}{2})}{\Gamma(\frac{1}{2}-s)\cos \pi s} $
\end{rem}
\begin{proof}
We use \cite{fayreine}, formula (89).\\
For $n=0$, we have
 $$ N_s^0(\pi -l a)=\frac{\pi \Gamma(s-\frac{1}{2})}{2\Gamma(\frac{1}{2} -s)\cos \pi s \Gamma(s)^2}4^s N_{1-s}^0(l a) +\frac{1}{\cos \pi s} N_s^0(l a)\, .$$
As $\Re s>1/2$, we deduce that, as $l\to 0$, $ N_s^0(\pi -l a)\sim\frac{\pi \Gamma(s-\frac{1}{2})}{2\Gamma(\frac{1}{2} -s)\cos \pi s \Gamma(s)^2}4^s a^s l^s$

For $n\not=0$, we have
 $$ N_s^n(\pi -l a)=\frac{\pi 4^{1-s}\Gamma(2s)\Gamma(2s-1)}{\Gamma(s+\bar{n})\Gamma(s-\bar{n}) \Gamma(s)^2 } N_{1-s}^n(l a) +\frac{\cos(\pi\bar{n})}{\cos \pi s} N_s^n(l a)\, .$$
The following behaviors 
\begin{eqnarray*}
 \cos(\pi\bar{n})&\sim& \frac{1}{2}e^{\frac{2\pi^2|n|}{l}}\\
 \Gamma(s+\bar{n})\Gamma(s-\bar{n})&\sim& 2\pi e^{-2s}\left(\frac{2\pi |n|}{l}\right)^{2s-1} e^{-\frac{2\pi^2}{l}|n|}
\end{eqnarray*}
allow to conclude.
\end{proof}
to conclude that
 the families $(l^{1-s}F_0^l(z,s))_l$ and $l^s e^\frac{2\pi^2|n|}{l}F_n^l(z,s))_l$ are convergent.

\bigskip

\smallskip

We then have the Fourier expansion of the generalized Eisenstein series

\begin{prop}\label{r}
$$E^{f_l}(s,z,t')=\sum_{n\in\ZZ} F_n^l(z,s)e^{\frac{2i\pi t'}{l}}$$
with the following asymptotic behavior of the Fourier coefficients, uniformly on compact subsets of $S_0$ and on compact subsets of $\Re s >1/2$  :
\begin{enumerate}
\item $\displaystyle l^{1-s}  F_0^l(.,s)$ tend to $E_\infty(.,s)$, 
\item  for $n\not =0$, $\displaystyle e^\frac{2|n|\pi^2}{l} l^{s} F_n^l(.,s)$
tend to $F_{-n}^0(.,s)$, as $l\to 0$.
\end{enumerate}
\end{prop}

\begin{rem}
\begin{enumerate}
\item For $\Re s>1$, we can refind this result by hand following Fay p.201.
\item For $n=0$, we can find a link between  $F_n^l$ and the hyperbolic Eisenstein series yet mentioned
$$\int_0^l G_s(z,iy')\, dt'=l F_0^l(z,s) N_s^0(\phi')$$ with $\phi '=\pi/2$
$$=-\frac{4^{s-1}}{\pi}\frac{\Gamma(s)^4}{\Gamma^2(2s)} N_s^0(\pi/2) \left[E_{hyp}^l(z,s)+0(E_{hyp}^l(z,s+1)\right]$$

\end{enumerate}

\end{rem}

The coefficients (\ref{7}) have the eigenfunction expansion (see \cite{fayreine}, formula (102)')
\begin{equation}\label{co}
lF_{-n}(z,s)=N^n_{1-s}(\phi)|z|^{\bar{n}}+\sum_{m\in \ZZ}\beta_{m,n}(s)N_s^m(\phi)|z|^{\bar{m}}.
\end{equation}
Notice that $\beta_{m,n}(s)$  is independent of $\phi=\arg z$. To explicit $\beta_{m,n}(s)$ we need to define the following quantities.
For $m, n\in \ZZ$ define the Kloostermann sum of hyperbolic type:
\begin{equation}\label{klooster}
\sigma(m,n,q)=\sum_{\gamma=\left(\begin{array}{cc}a&b\\c&d\end{array}\right)\in\<\sigma_l\>\backslash \Gamma^*/\<\sigma_l\>}
\left|\frac{a^mc^n}{b^ma^n}\right|^{\bar{n}},\, q=\frac{ad}{bc},
\end{equation}
where the summation is taken over all $\gamma\not=\left(\begin{array}{cc}1&0\\0&1\end{array}\right),\, \left(\begin{array}{cc}0&-\frac{1}{c}\\c&0\end{array}\right)\in \Gamma$.  Here $q$ is independent of the class of $\gamma$ in $\<\sigma_l\>\backslash \Gamma^*/\<\sigma_l\>$, and $0<q=1+\frac{1}{bc}<1$ since
$$\gamma(0)=\frac{b}{d},\quad \gamma(\infty)=\frac{a}{c},\quad\gamma^{-1}(0)=\frac{-b}{a},\quad \text{ and }\gamma^{-1}(\infty)=\frac{-d}{c}$$
are all strictly negative. Then for $n\in \ZZ$,
\begin{equation*}
\beta_{m,n}(s)=\frac{4^{s-1}}{\pi}\frac{\Gamma(s)^2\Gamma(s+\bar{n})\Gamma(s-\bar{n})}{\Gamma(2s)\Gamma(2s-1)}\frac{1}{l}
\left\{ \sum_{0<q<1}\sigma(m,-n,q){\mathcal I}_s(m,-n,q)+\delta_m^n \frac{\cos\pi\bar{n}}{\cos \pi s}\right\}\, ,
\end{equation*}
where 
$${\mathcal I}_s(m,n,q)=(1-q)^s q^{\bar{n}-s}\frac{\Gamma(s-\bar{m})\Gamma(s+\bar{m})}{\Gamma(2s)} F(s-\bar{n}, s-\bar{m},2s;\frac{q-1}{q})\, .$$
From the behavior of $N_s^n(\phi)$ we deduce
\begin{equation}
S_{l}(s)f_n=\sum_{m\in \ZZ}\beta_{m,n}(s) f_m
\end{equation}
 where $f_k(t)=e^{\frac{2i\pi k t}{l}}$ is the Fourier basis for the scalar product
$$\<f,g\>=\frac{1}{l} \int_0^l f(t)\bar{g}(t)\, dt\, .$$
The $f_k$, where $f_k(t)=e^{2i\pi k t}$,  are eigenfunctions of $S_{F_l}(s)$,

$$S_{F_l}(s)f_k=\gamma_k(s)f_k ,$$
with eigenvalues
\begin{equation}\label{fun}
\gamma_k(s)=\frac{\Gamma(\frac{1}{2}-s)\Gamma(\frac{s+1+\bar{k}}{2})\Gamma(\frac{s+1-\bar{k}}{2})}{\Gamma(s-\frac{1}{2})\Gamma(\frac{2-s+\bar{k}}{2})\Gamma(\frac{2-s-\bar{k}}{2})}
\end{equation}
where $\bar{k}=\frac{i2k\pi}{l}$.

Then 
$$S^l_{rel}(s) f_n=\sum_{m\in \ZZ}\beta_{m,n}(s) \gamma_m(1-s) f_m\, .$$

$$\beta_{n,n}(s) =\frac{I_n - N_{1-s}^n(\phi)}{N_{s}^n(\phi)}$$
where 
$$I_n=\int_0^l F_{-n}^l(z,s)e^{\frac{-2i\pi n u}{l}}\, du$$ and $u=\ln|z|$.
Denote by $u_n(l)=\beta_{n,n}(s)\gamma_n(1-s) -1$,
\begin{equation}\label{esti}
u_n(l)=\frac{\gamma_n(1-s)I_n-\gamma_n(1-s)N_{1-s}^n(\phi)-N_s^n(\phi)}{N_s^n(\phi)}\, .
\end{equation}
With $\phi=\pi -l\Im \zeta$, let estimate $u_n(l)$ as $l\to 0$.
%
%
%

We verify that
\begin{equation*}
\gamma_n(1-s)N_{1-s}^n(\pi - l\Im\zeta) +N_{s}^n(\pi-l\Im\zeta)=-\left(\gamma_n(1-s)N_{1-s}^n(l\Im \zeta) +N_{s}^n(l\Im \zeta)\right)
\end{equation*}
and recall that
 $$ N_s^n(\pi -l \Im\zeta)=\frac{\pi 4^{1-s}\Gamma(2s)\Gamma(2s-1)}{\Gamma(s+\bar{n})\Gamma(s-\bar{n}) \Gamma(s)^2 } N_{1-s}^n(l \Im\zeta) +\frac{\cos(\pi\bar{n})}{\cos \pi s} N_s^n(l \Im\zeta)\, .$$

For $n=0$,the numerator and denominator in (\ref{esti}) can be expressed as a $l^s O(1) + l^{1-s} O(1)$, so because $\Re s >1/2$, we have only to precise the coefficient of $l^{1-s}$. Using Proposition \ref{r}, we have
$I_0=\int_0^l F_0^l(z,s)\, du = O(l^s)$, thus

$$u_0(l)\sim \frac{\gamma_0(1-s)}{\pi \frac{4^s}{2}\frac{\Gamma(s-\frac{1}{2})}{\Gamma(\frac{1}{2}-s)\cos \pi s \Gamma(s)^2}} = \frac{\cos \pi s}{2(\sin\frac{\pi}{2}s)^2}\, .$$

For $n\not = 0$ we obtain
$$u_n(l)=O\left(e^{-\displaystyle\frac{2\pi^2|n|}{l}}\left|\frac{n}{l}\right|^{1-2s}\right)\, .$$

\bigskip
For $m\not =n$, we are going to establish the asymptotic behavior of 
$$u_{m,n}(l)=\beta_{m,n}(s)\gamma_m(1-s)\, .$$
We have 
$$I_{m,n}=\int_0^l F_{-n}^l(z,s)e^{\frac{-2i\pi m u}{l}}\, du=l\int_{-1}^0F_{-n}^l(.,s)e^{2im\Re \zeta}\, d\Re \zeta=\beta_{m,n}(s)N_s^m(\phi)$$ and $u=\ln|z|=-l\Re \zeta$, $\phi=\arg z=\pi -l\Im\zeta$.

With \ref{} we have
$$I_{m,n}\sim 
\frac{\Gamma(s+\bar{n})\Gamma(s-\bar{n}) \Gamma(s)^2 }{\pi 4^{1-s}\Gamma(2s)\Gamma(2s-1)} l^s(4\pi|n|)^{-s}\left(\frac{-4\pi|n|\Gamma(2s)}{\Gamma(s)}\right)\int_{-1}^0F_n(\zeta,s) e^{2im\Re \zeta}\, d\Re \zeta\, .$$

Now 
 $$ N_s^m(\pi -l \Im\zeta)=\frac{2\pi\sin\frac{\pi}{2}(s-\bar{m})\sin\frac{\pi}{2}(s+\bar{m}) }{\cos^2\pi s (s-\frac{1}{2})\Gamma(s-\frac{1}{2})\Gamma(\frac{1}{2}-s) }\gamma_m(1-s) N_{1-s}^m(l \Im\zeta) +\frac{\cos(\pi\bar{m})}{\cos \pi s} N_s^m(l \Im\zeta)\, .$$
Then
$$u_{m,n}(l)=\frac{I_{m,n}}{\frac{2\pi\sin\frac{\pi}{2}(s-\bar{m})\sin\frac{\pi}{2}(s+\bar{m}) }{\cos^2\pi s (s-\frac{1}{2})\Gamma(s-\frac{1}{2})\Gamma(\frac{1}{2}-s) }+\frac{\cos(\pi\bar{m})}{\cos \pi s} \frac{N_s^m(l \Im\zeta)}{\gamma_m(1-s)}}\, .$$
We have (\cite{borth},p. 164)
$$\gamma_m(1-s)\sim 2^{2s-1}\frac{\Gamma(s-\frac{1}{2})}{\Gamma(\frac{1}{2}-s)}\left|\frac{2\pi m}{l}\right|^{1-2s}$$
For the numerator we have
$$I_{m,n}\sim -4^s\pi^{s}e^{-2s}\frac{\Gamma(s)}{\Gamma(2s-1)}\left(\frac{|n|}{l}\right)^{s-1}|n|\int_{-1}^0F_n(\zeta,s) e^{2im\Re \zeta}\, d\Re\zeta \, e^{-\frac{2\pi^2|n|}{l}}\, .$$
For the numerator we have
$$\frac{\pi^{2s-1}\Gamma(\frac{1}{2}-s)}{2\cos \pi s\Gamma(s-\frac{1}{2})}|m|^{2s-1}e^{2\pi^2\frac{|m|}{l}} l^{1-s}\Im^s\zeta$$

\begin{proof}
We have $\tau_{S_l}(s)=\sum_{k=0}^\infty \tr \Lambda^k A_l$.

To estimate the preceding traces, we use Fourier expansion in \cite{fayreine}, p.12 and the following result is used in particular (\cite{borth}).

%
%
%
%
%

In the crowd we will obtain results of convergence of the scattering matrix as well as the Eisenstein series corresponding to the funnel $F_l$ for $\Re s>1/2$.
\end{proof}

%
%
%
%

We develop here an explicit case for Theorem \ref{clef} and look more in details when $C_l=\Gamma_l\backslash H$ is the hyperbolic cylinder. The limited surface is to copies of the parabolic cylinder $C_\infty=\Gamma_\infty\backslash H$.

We begin to notice that the hyperbolic Eisensein series associated to the simple closed geodesic in $C_l$ is $E_l=\sin^s \arg z$ and the Eisenstein series in $C_\infty$ is $E_\infty(z)=\Im^s(z)$. If we do the change of variables to study the right (resp. left)  side of th collar (resp. left) $w=\frac{1}{l}\log z$ (resp. $\zeta=-\frac{1}{l}\log(-z)$), $\arg z=l\Im w$ (resp. $\arg z=\pi -l\Im \zeta$), we find that $\frac{1}{l^s}E_l\to E_\infty$.

Now we are going to explicit the generalised eigenfunction $E^{f_l}(s,z,t')=(1-2s)\lim_{\rho'\to 0}\rho'^{-s}G_s^{C_l}(z,z')$. For this we use \cite{borth}, Proposition 5.3, p.66

$$E^{f_l}(s,z,t')=\frac{2s-1}{2\sqrt{\pi}\Gamma(s+\frac{1}{2})}\sum_{k\in\ZZ}a_k(s)v_k(s;-r)e^{\bar{k}(t-t')}\,$$
with

$v_k(s;-r)=\frac{\sinh r(\cosh r)^{\bar{k}}}{\Gamma(\frac{s-\bar{k}}{2})\Gamma(\frac{s+\bar{k}}{2})}F(\frac{\bar{k}+1+s}{2}, \frac{\bar{k}+1+s}{2};\frac{3}{2}; -\sinh^2 r)+\frac{(\cosh r)^{\bar{k}}}{2\Gamma(\frac{1+s-\bar{k}}{2})\Gamma(\frac{1+s+\bar{k}}{2})}F(\frac{\bar{k}+s}{2}, \frac{\bar{k}+1-s}{2};\frac{1}{2}; -\sinh^2 r) $
where 

$a_k(l)=\frac{\pi}{l}4^{1-s}\Gamma(s-\bar{k})\Gamma(s+\bar{k})$.

As we have Proposition \ref{r} we obtain the Fourier coefficients for the hyperbolic cylinder
$$F^l_{-n}(z,s)=\frac{2s-1}{2\sqrt{\pi}\Gamma(s+\frac{1}{2})}a_n(s)v_n(s;-r)e^{\bar{n}t}\, .$$
From the  formula(\ref{co}), we have
$$\frac{(2s-1)l}{2\sqrt{\pi}\Gamma(s+\frac{1}{2})}a_n(s)v_n(s;-r)=N_{1-s}^n(\theta)+\beta_{n,n}(s)N_s^n(\theta)\, ,$$
with, as usual, $\theta=\arg z$, $1/\cosh r=\sin\theta$.
Using, for example, Formula 15.3.8 in \cite{abra}, we obtain
$$\beta_{n,n}(s)=\frac{1}{2}(\gamma_n(s)+\gamma_n^+(s))$$
where $\gamma_n(s)$ is given in (\ref{fun}) and 
$$\gamma_n^+(s))=\gamma_k(s)=\frac{\Gamma(\frac{1}{2}-s)\Gamma(\frac{s+1+\bar{k}}{2})\Gamma(\frac{s+1-\bar{k}}{2})}{\Gamma(s-\frac{1}{2})\Gamma(\frac{2-s+\bar{k}}{2})\Gamma(\frac{2-s-\bar{k}}{2})}\, .$$
\begin{rem}
Notice that we find again \cite{borth}, p. 115.
\end{rem}
Now we have
$$\tau_{C_l}(s)=\Pi_n \frac{1}{\gamma_n(s)}\beta_{n,n}(s)\Pi_n \frac{1}{2}(1+\frac{\gamma_n(s)}{\gamma_n^+(s)})$$

We know by Schulze that the quotient $\frac{Z_l(s)}{z_l(s)}$  converges to the Zeta function of the limit surface $S_0$ if $\Re s>1/2$. By the preceding theorem we can see that it is no more true in any domain that intersects $\{s\in \CC, \Re s<1/2\}$
\begin{cor}
The quotient $\displaystyle\frac{Z_l(s)}{z_l(s)}$ cannot converge for $\Re s<1/2$ as $l\to 0$.
\end{cor}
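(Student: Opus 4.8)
The plan is to deduce the non-convergence of the weighted Selberg zeta $\frac{Z_l(s)}{z_l(s)}$ on the strip $0<\Re s<1/2$ from the already-established convergence of the relative scattering determinant $\tau_{S_l}(s)$ for $\Re s>1/2$, exactly parallel to the mechanism used in the compact and finite-volume cases. The key structural input is the functional equation relating the Selberg zeta function at $s$ and at $1-s$ in the infinite-volume setting. For a Fuchsian group of the second kind, $Z_l(s)$ satisfies a functional equation whose ``scattering'' factor is precisely governed by the relative scattering determinant $\tau_{S_l}(s)$ (together with elementary $\Gamma$-factor contributions coming from the funnel geometry and the volume term). Schematically I would write
\begin{equation}\label{funct-eq}
\frac{Z_l(s)}{z_l(s)}=\frac{Z_l(1-s)}{z_l(1-s)}\,\Psi_l(s),
\end{equation}
where $\Psi_l(s)$ collects the scattering determinant $\tau_{S_l}(s)$ and the explicit elementary factors depending on $l$.

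The first step is to make \eqref{funct-eq} precise by invoking the known functional equation for the Selberg zeta function of an infinite-area surface (for instance in the form found in Borthwick \cite{borth}), and to isolate the $l$-dependence of $\Psi_l(s)$. Here I would use the explicit funnel scattering eigenvalues $\gamma_k(s)$ displayed in the theorem above, together with the normalizing factor $z_l(s)=\Pi_{k=0}^\infty(1-e^{-(s+k)l})^2$, to extract the leading power of $l$ in $\Psi_l(s)$. The point is that the volume of $F_l$ and the associated $\Gamma$-factors produce a factor behaving like a fixed negative power of $l$ on the strip, so that $\Psi_l(s)=O(l^{\alpha(s)})$ with $\alpha(s)<0$ for $0<\Re s<1/2$; this is the infinite-volume analogue of the estimate $O(l^{4s-2})$ used in the compact corollary.

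The second step is to combine this with the result of Schulze that $\frac{Z_l(1-s)}{z_l(1-s)}$ converges (to $Z_0(1-s)$, the zeta function of the limit surface) on $\Re(1-s)>1/2$, i.e. for $\Re s<1/2$, and that this limit is generically nonzero. Since the right-hand factor $\frac{Z_l(1-s)}{z_l(1-s)}$ has a finite nonzero limit while the multiplier $\Psi_l(s)$ diverges as $l\to 0$, the product on the left cannot converge. Concretely, assuming for contradiction that $\frac{Z_l(s)}{z_l(s)}$ converges on $0<\Re s<1/2$, one divides \eqref{funct-eq} by the convergent nonvanishing factor $\frac{Z_l(1-s)}{z_l(1-s)}$ and concludes that $\Psi_l(s)$ converges, contradicting its divergent leading power of $l$.

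I expect the main obstacle to be the bookkeeping in the first step: writing down the correct functional equation for $Z_l$ in the second-kind Fuchsian setting and then cleanly separating the relative scattering determinant $\tau_{S_l}(s)$ from the elementary $\Gamma$-factors and volume contributions, so as to read off the exact power $\alpha(s)$ of $l$. Because $\tau_{S_l}(s)$ itself tends to the finite limit $\frac{1}{2(\sin\frac{\pi}{2}s)^2}$ by the theorem, the divergence must come entirely from the remaining elementary factors; verifying that these do not conspire to cancel and that the residual power of $l$ is genuinely negative throughout $0<\Re s<1/2$ (rather than only at isolated points) is the delicate part. A secondary subtlety is ensuring the limiting factor $Z_0(1-s)$ is nonzero on the relevant strip, which one handles by restricting to the region where $Z_0$ has no zeros, or equivalently by phrasing the conclusion as failure of locally uniform convergence.
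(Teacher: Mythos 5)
Your proposal follows essentially the same route as the paper: the paper likewise starts from Borthwick's formula $\tau_{S_l}(s)=c\,\frac{Z_{S_l}(1-s)}{Z_{S_l}(s)}\bigl(\frac{G_\infty(s)}{G_\infty(1-s)}\bigr)^{-\chi(S_l)}\frac{Z_{F_l}(1-s)}{Z_{F_l}(s)}$, invokes the convergence of $\tau_{S_l}(s)$ for $\Re s>1/2$ (the preceding theorem) and Schulze's convergence of the weighted zeta on $\Re s>1/2$, and extracts the divergent power of $l$ from the elementary factors via $\frac{Z_{F_l}(1-s)}{Z_{F_l}(s)}=O(l^{2s-1})$ and $z_l(1-s)=l^{4s-2}z_l(s)$. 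Your $\Psi_l(s)$ is precisely this combination of factors, with $\alpha(s)=2s-1$, so the two arguments coincide.
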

We will need the function equation connecting $Z_l(s)$ to $Z_l(1-s)$ via the relative scattering determinant. This is the analogue of Selberg's functional equation from the compact case.
Before writing it in our case, we recall some notations. Since there is exactly one primitive closed geodesic (with  two possible orientations) on the hyperbolic cylinder $C_l=\Gamma_l\backslash H$, the Selberg zeta function on $C_l$ is the entire function
$$z_{l}(s)=\Pi_{k\geq 0}(1-e^{-(s+k)l})^2\, .$$
Following \cite{borth}, p. 180, we will set, for the funnel zeta function, for $s\in\CC$
\begin{equation}
Z_{F_l}(s)=e^{-sl/4}\Pi_{k\geq 0}(1-e^{-(s+2k+1)l})^2\, .
\end{equation}
Let's verify that for $s\in\CC$
\begin{lemma}
$$\frac{Z_{F_l}(1-s)}{Z_{F_l}(s)}=O(l^{2s-1})\, .$$
\end{lemma}
\begin{proof}
We rewrite the absolute convergent product of the Selberg zeta function of the cylinder 
$$z_{l}(s)=\Pi_{k\geq 0}(1-e^{-(s+k)l})^2=\Pi_{k\geq 0}(1-e^{-(s+2k+1)l})^2\Pi_{k\geq 0}(1-e^{-(s+2k)l})^2\, .$$
Now
$$\Pi_{k\geq 0}(1-e^{-(s+2k)l})^2=\Pi_{k\geq 0}(1-e^{-(s/2+k)2l})^2=z_{2l}(s/2)\, .$$
We have \cite{schulze} Lemma 39 (see also \cite{he}, \cite{wp})
$$\lim_{l\to 0}z_l(s)\Gamma(s)^2e^{\pi^2/3l}l^{2s-1}=2\pi\, .$$
Then $z_{2l}(s/2)=O(e^{-\frac{\pi^2}{6l}}l^{1-s})$ and the result with
$$Z_{F_l}(s)=e^{-s l/4}z_l(s)/z_{2l}(s/2)=O(e^{-\frac{\pi^2}{6l}}l^{-s})\, .$$
\end{proof}
The entire function $G_\infty$ is defined in terms of the Barnes G-function (\cite{borth},p. 44)
\begin{eqnarray*}
G(s+1)&=&(2\pi)^{s/2}e^{-s/2-(\gamma+1)s^2/2}\Pi_{k=1}^\infty (1+\frac{s}{k})^ke^{-s+s^2/(2k)}\, ,\\
G_\infty&=&(2\pi)^{-s}\Gamma(s)G(s)^2
\end{eqnarray*}

\begin{proof}
We have \cite{borth}, p. 195

$$\tau_{S_l}(s)=c\frac{Z_{l}(1-s)}{Z_{l}(s)}\left(\frac{G_\infty(s)}{G_\infty(1-s)}\right)^{-\chi(S_l)}\frac{Z_{F_l}(1-s)}{Z_{F_l}(s)}$$
with a constant $c=\tau_{S_l}(1/2)$  and the Euler characteristic $\chi(S_l)=2-2g-n_f$. 
We have $\tau_{S_l}(s)\tau_{S_l}(1-s)=1$  then $|c|=1$. Moreover $\chi(S_l)$ is invariant under degeneration because we lost a funnel and win a cusp.

Then for $\Re s>1/2$

$$Z_l(1-s)=\frac{1}{c}\left(\frac{G_\infty(s)}{G_\infty(1-s)}\right)^{\chi(S_l)}\frac{Z_{F_l}(1-s)}{Z_{F_l}(s)}\tau_{S_l}(s)Z_l(s)$$

Moreover

$$z_l(1-s)=l^{4s-2}z_l(s)$$

and so
 $$\frac{Z_{l}(1-s)}{z_{l}(1-s)}=O(\frac{1}{l^{2s-1}})\frac{Z_{l}(s)}{z_{l}(s)}\, .$$

\end{proof}

We are going to see that, like in the finite volume case
\begin{prop}
 For $0<\Re s <1/2$, $\displaystyle\frac{1}{l^s} E_l(z,s)=0(l^{1-2s})$.
\end{prop}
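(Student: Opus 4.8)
The plan is to follow step by step the strategy of the finite volume case, replacing the single cusp Eisenstein series by the funnel Poisson operator. I start from the same resolvent identity of \cite{fayreine} (99) that expresses $E_l(z,s)$ through the integral $\int_1^{e^l} G_s(z,iy',l)\, d\ln y'$ along the geodesic being pinched. In the infinite volume setting the resolvent is again not symmetric, and the relation between $G_s$ and $G_{1-s}$ should now read, with the kernel $E^{f_l}(s;z,t')=(1-2s)\lim_{\rho'\to 0}{\rho'}^{-s}G_s(z,z')$ introduced above,
$$G_s(z,iy')=G_{1-s}(z,iy')-\frac{1}{2s-1}\int_0^l E^{f_l}(s;z,t')\,E^{f_l}(1-s;iy',t')\,dt'\, ,$$
the boundary integral over $\partial\overline{F_l}=[0,l]$ playing the role of the single discrete term $E_p^l(z,s)\,E_p^l(1-s,iy')$ of the cusp case.

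I would then integrate this identity over $[1,e^l]$ against $d\ln y'$ and divide by $l^s$, splitting $\frac{1}{l^s}E_l(z,s)$ into a leading term coming from $\int_1^{e^l}G_{1-s}\,d\ln y'$ and a correction coming from the boundary integral. The leading term reproduces, up to the same $O(E_l(z,2-s))+O(E_l(z,s+1))$ remainders as in \eqref{compact}, the quantity $\frac{1}{l^s}E_l(z,1-s)$; since $\Re(1-s)>1/2$ the family $\frac{1}{l^{1-s}}E_l(z,1-s)$ converges by the known results in the right half plane, hence $\frac{1}{l^s}E_l(z,1-s)=l^{1-2s}\cdot O(1)=O(l^{1-2s})$. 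This already yields the announced order for the dominant part, and it remains to prove that the correction is at most of the same order, in fact $o(l^{1-2s})$.

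For the inner factor the measure of $[1,e^l]$ in $d\ln y'$ is $l$ and $E^{f_l}(1-s;iy',\cdot)$ converges for $\Re(1-s)>1/2$ by the convergence of the funnel Eisenstein series established in the proof of the Theorem, so $\int_1^{e^l}E^{f_l}(1-s;iy',t')\,d\ln y'=O(l)$ uniformly in $t'$. The outer factor $E^{f_l}(s;z,\cdot)$, for $0<\Re s<1/2$, I would recover from $E^{f_l}(1-s;z,\cdot)$ through the functional equation of the funnel Poisson operator, i.e. by applying $S_{F_l}$. Expanding in the Fourier modes $f_k(t)=e^{2i\pi kt}$, which diagonalize $S_{F_l}(s)$ with eigenvalues $\gamma_k(s)$, this amounts to multiplying the $k$-th coefficient by $\gamma_k(s)$.

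The hard part will be the control of this mode sum as $l\to 0$, where $\bar k=\frac{i2k\pi}{l}\to\infty$ for $k\neq 0$. Applying Stirling to the quotients of Gamma functions with large argument in the formula for $\gamma_k(s)$ gives $\gamma_k(s)\sim \cst\,|k|^{2\Re s-1}\,l^{1-2s}$ for $k\neq 0$, while the zero mode $\gamma_0(s)$ is the fixed $l$-independent function $\Gamma(\tfrac12-s)\Gamma(\tfrac{s+1}{2})^2/\big(\Gamma(s-\tfrac12)\Gamma(\tfrac{2-s}{2})^2\big)$. Since $E^{f_l}(1-s;z,\cdot)$ is smooth its Fourier coefficients decay rapidly, which is exactly what compensates the non-summable factor $|k|^{2\Re s-1}$ and makes the zero mode dominate: the correction is of order $\frac{1}{l^s}\cdot O(1)\cdot O(l)=O(l^{1-s})$ from $k=0$ and $\frac{1}{l^s}\cdot O(l^{1-2s})\cdot O(l)=O(l^{2-3s})$ from the remaining modes, both $o(l^{1-2s})$ on $0<\Re s<1/2$. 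The two delicate points to make rigorous are precisely the uniform-in-$l$ summability of this Fourier series together with the $\gamma_k(s)$ asymptotics, and checking that the functional equation is used in the direction producing the decaying factor $l^{1-2s}$ rather than its reciprocal $l^{2s-1}$; once these are settled the correction is absorbed and $\frac{1}{l^s}E_l(z,s)=O(l^{1-2s})$.
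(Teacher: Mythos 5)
Your proposal follows the same skeleton as the paper's argument: the same resolvent identity $G_s-G_{1-s}=\frac{1}{1-2s}\int_0^l E^{f_l}(1-s;\cdot,t')E^{f_l}(s;\cdot,t')\,dt'$ (your version with the roles of $s$ and $1-s$ exchanged in the two factors is equivalent, by symmetry of the left-hand side in $(z,w)$), the same treatment of the main term $\frac{1}{l^s}E_l(z,1-s)=O(l^{1-2s})$ via the known convergence for $\Re s>1/2$, and Fourier analysis on the funnel boundary to show the correction is $o(l^{1-2s})$. Where you genuinely differ is the correction term: the paper estimates the product integral directly from Fay's Fourier expansion of the \emph{surface} resolvent, obtaining $\frac1l\int_0^l E^{f_l}(1-s,z,t')E^{f_l}(s,w,t')\,dt'\sim l^{-s}$, whereas you pass through the functional equation of the Poisson operator and the explicit eigenvalues $\gamma_k(s)$. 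Your Stirling asymptotics are correct ($|\gamma_k(s)|\sim \cst\,|k|^{2\Re s-1}l^{1-2\Re s}$ for $k\neq 0$, with an $l$-independent zero mode), the functional equation is indeed used in the decaying direction for $\Re s<1/2$, and your final orders (as well as the paper's $O(l^{2-2s})$; the mismatch is a normalization bookkeeping issue) are all $o(l^{1-2s})$, so the conclusion would follow.

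The genuine gap is the identification of the operator in the functional equation. The kernel $E^{f_l}(s;z,t')$ is defined from the resolvent $G_s$ of the surface $S_l$, so the relation expressing $E^{f_l}(s)$ through $E^{f_l}(1-s)$ involves the scattering matrix $S_l(s)$ of the surface, not the model funnel operator $S_{F_l}(s)$; the modes $f_k$ diagonalize $S_{F_l}(s)$ but not $S_l(s)$, so your mode-by-mode multiplication by $\gamma_k(s)$ is not literally available. To repair this you must write $S_l(s)=S_{F_l}(s)S^l_{rel}(s)$ and establish that the relative scattering operator $S^l_{rel}(s)$ stays uniformly bounded as $l\to 0$, in a norm strong enough to preserve your Fourier-decay bookkeeping. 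That input is not supplied by the paper's Theorem on $\tau_{S_l}(s)$: convergence of the Fredholm determinant does not control the operator norm, and note that the limit $\frac{1}{2\sin^2(\pi s/2)}$ is not $1$, so $S^l_{rel}(s)$ does not even tend to the identity. The paper's own route avoids this issue entirely, since Fay's expansion applies to the surface resolvent and never passes through the scattering operator; your variant is repairable, but only at the price of an additional degeneration estimate on $S^l_{rel}(s)$ that is of comparable difficulty to the proposition itself.
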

 With this result we may hope the convergence of $E_l^*(z,s)$ below $\Re s=1/2$.

\begin{proof}
We can write
$$G_s(z,w)-G_{1-s}(z,w)=\frac{1}{1-2s}\int_0^l E^{f_l}(1-s,z,t')E^{f_l}(s,w,t')\, dt'\, .$$
Using Fourier expansion in \cite{fayreine}, we obtain
$$\frac{1}{l}\int_0^l E^{f_l}(1-s,z,t')E^{f_l}(s,w,t')\, dt'\sim \frac{1}{l^s}$$ and the result, namely: $\displaystyle\frac{1}{l^s} E_l(z,s)=0(l^{1-2s})$.
\end{proof}
%
%
%

%
%
%
%
%
%
%
%
%
%

Now, unlike the finite volume case, for $0<\Re s<1/2$, $\displaystyle\frac{Z_l(s)}{z_l(s)}=O(\displaystyle\frac{1}{l^{1-2s}})$, we deduce

\begin{thm}
The family $(E^*_l(.,s))_l$ converges uniformly on compact subsets of $S_0$ and $\{ \Re s >0\}$ to $Z_0(s) E_\infty(.,s)$.
\end{thm}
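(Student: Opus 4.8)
The plan is to promote the convergence already established on the half-plane $\{\Re s>1/2\}$ to the full half-plane $\{\Re s>0\}$ by a normal-families argument, the two order estimates proved above providing exactly the control needed below the critical line. The essential observation is that the two factors constituting $E^*_l$ degenerate in opposite directions on the strip $\{0<\Re s<1/2\}$: by the preceding Proposition one has $\frac{1}{l^s}E_l(z,s)=O(l^{1-2s})$, while we have just noted that $\frac{Z_l(s)}{z_l(s)}=O(l^{2s-1})$. Since $(1-2s)+(2s-1)=0$, multiplying the two estimates cancels the powers of $l$ and yields
$$E^*_l(z,s)=O(1),$$
uniformly for $(z,s)$ in compact subsets of $S_0\times\{0<\Re s<1/2\}$. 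This is precisely where the infinite volume case departs from the finite volume one, in which only the Eisenstein factor degenerates and the product remains of size $l^{2s-1}$.

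First I would fix compact sets $K\subset S_0$ and $Q\subset\{\Re s>0\}$ and record that, for each $l$, the function $s\mapsto E^*_l(z,s)$ is holomorphic on a neighbourhood of $Q$: the denominator $z_l(s)$ is exactly the local factor $z(l,s)$ of the pinched geodesic $c_l$, so the quotient $Z_l(s)/z_l(s)$ strips off the contribution of $c_l$ and is holomorphic on $\{\Re s>0\}$, and $\frac{1}{l^s}E_l(z,s)$ is holomorphic there as well. Combining the bound on $\{0<\Re s<1/2\}$ with the local boundedness that the known convergence forces on $\{\Re s>1/2\}$, and using holomorphy together with the maximum principle to carry the bound across the line $\Re s=1/2$, I would conclude that the family $\bigl(E^*_l(z,\cdot)\bigr)_l$ is locally uniformly bounded on all of $\{\Re s>0\}$ and hence normal by Montel's theorem.

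Next I would apply the Vitali--Porter theorem on the connected domain $\{\Re s>0\}$. The sequence converges locally uniformly on the open subset $\{\Re s>1/2\}$ to $Z_0(s)E_\infty(z,s)$ and is locally uniformly bounded on the whole half-plane; Vitali's theorem therefore forces local uniform convergence on all of $\{\Re s>0\}$ to the unique holomorphic extension of that limit. Since $Z_0(s)E_\infty(z,s)$ is itself holomorphic on $\{\Re s>0\}$, this extension is $Z_0(s)E_\infty(z,s)$ throughout, and because all the preceding estimates are uniform in $z$ over compacta of $S_0$, the convergence is uniform on compact subsets of $S_0\times\{\Re s>0\}$, which is the claim.

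I expect the main obstacle to be making the two order estimates genuinely uniform rather than pointwise --- uniform in $z$ over compact subsets of $S_0$ and, above all, uniform in $s$ as $\Re s\to 1/2$ --- so that their product is honestly $O(1)$ and the maximum principle can be invoked across the critical line. The delicate point is to verify that $E^*_l$ carries no pole at $s=1/2$: the factor $\frac{1}{1-2s}$ appearing in the resolvent identity $G_s(z,w)-G_{1-s}(z,w)=\frac{1}{1-2s}\int_0^l E^{f_l}(1-s,z,t')E^{f_l}(s,w,t')\,dt'$ must be seen to cancel in $E^*_l$, after which normality across $\Re s=1/2$ is secured and the identification of the limit follows automatically by analytic continuation.
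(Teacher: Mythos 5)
Your proposal matches the paper's own proof essentially step for step: boundedness on $\{0<\Re s<1/2\}$ from the product of the two order estimates $\frac{1}{l^s}E_l(z,s)=O(l^{1-2s})$ and $\frac{Z_l(s)}{z_l(s)}=O(l^{2s-1})$, boundedness on $\{\Re s>1/2\}$ from the known convergence, a Phragm\'en--Lindel\"of/maximum-principle argument to carry the bound across the critical line, and then Vitali's theorem to propagate the convergence from $\{\Re s>1/2\}$ to all of $\{\Re s>0\}$ with the limit identified by analytic continuation. The only cosmetic differences are that the paper names the crossing step ``Phragm\'en--Lindel\"of'' (citing Hejhal) where you invoke the maximum principle, and that the holomorphy on $\{\Re s>0\}$ properly belongs to the product $E^*_l$ (the zeros of $Z_l$ cancelling poles of the continued Eisenstein series) rather than to each factor separately, as you implicitly assume.
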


\begin{proof}
We know by (\cite{schulze} and \cite{hal}) that the family $(E^*_l(z,s))_l$ is bounded on  $\Re s>1/2$. From the preceding theorem, it's also bounded on $\{ 0<\Re s <1/2\}$. Then by Phragm\'en-Lindel\"{o}f method (see for exemple \cite{he}), it's bounded on $\Re s>0$ and by Vitali's theorem we have the result.
\end{proof}

From this study we hope to obtain some applications on  the spectrum of the Laplace operator.

\end{document}